\renewcommand{\baselinestretch}{\baselinestretch}
\renewcommand{\baselinestretch}{1.1}
\numberwithin{equation}{section}
\newcommand{\Q}{\mathbb{Q}}
\newcommand{\R}{\mathbb{R}}
\newcommand{\Z}{\mathbb{Z}}
\renewcommand{\O}{\mathcal{O}}
\newtheorem{thm}{Theorem}[section]
\newtheorem{prop}[thm]{Proposition}
\newtheorem{lem}[thm]{Lemma}
\newtheorem{cor}[thm]{Corollary}
\theoremstyle{definition}
\newtheorem{defn}[thm]{Definition}
\theoremstyle{remark}
\newtheorem{rmk}[thm]{Remark}
\newcommand{\oc}{\overline{c}}
\newcommand{\od}{\overline{\alpha}}
\numberwithin{equation}{section}
\newcommand{\bx}{\bm x}
\newcommand{\by}{\bm y}
\newcommand{\bu}{\bm u}
\newcommand{\bv}{\bm v}
\newcommand{\pmu}{{(\mu)}}
\newcommand{\pnu}{{(\nu)}}
\newcommand{\Norm}{\mathbb{N}}
\newcommand{\NormK}{\Norm_{K/\Q}}
\newcommand{\Trace}{{\mathbb T}{\mathrm r}}
\newcommand{\TraceK}{\Trace_{K/\Q}}
\newcommand{\U}{\bm U}
\newcommand{\T}{\bm T}
\newcommand{\diag}{\textnormal{diag}}
\newcommand{\0}{{\bm 0}}
\newcommand{\rep}{\rightarrow \!\!\!-}
\newcommand{\p}{{\mathfrak p}}
\newcommand{\lcrown}{\vert\!\lceil}
\newcommand{\rcrown}{\rceil\!\vert}
\newcommand{\lbase}{\vert\!\lfloor}
\newcommand{\rbase}{\rfloor\!\vert}
\begin{document}

\title[Reduction and Waring's problem for quadratic forms]{Hermite reduction and a Waring's problem for integral quadratic forms over number fields}

\author{Wai Kiu Chan}
\address{Department of Mathematics and Computer Science, Wesleyan University, Middletown CT, 06459, USA}
%\curraddr{}
\email{wkchan@wesleyan.edu}

\author{Mar\'ia In\'es Icaza}
\address{Instituto de Matem\'atica y F\'isica, Universidad de Talca, Casilla 747, Talca, Chile.}
\email{icazap@inst-mat.utalca.cl}

\subjclass[2010]{Primary 11E12, 11E25, 11E39}

\keywords{Waring's problem, Integral quadratic forms, Sums of squares, reduction theory}

\dedicatory{In memory of John Hsia, a mentor and a friend, who taught us everything we know about quadratic forms.}

\begin{abstract}
We generalize the Hermite-Korkin-Zolotarev (HKZ) reduction theory of positive definite quadratic forms over $\mathbb Q$ and its balanced version introduced recently by Beli-Chan-Icaza-Liu to positive definite quadratic forms over a totally real number field $K$.  We apply the balanced HKZ-reduction theory to study the growth of the {\em $g$-invariants} of the ring of integers of $K$.  More precisely, for each positive integer $n$, let $\O$ be the ring of integers of $K$ and $g_{\O}(n)$ be the smallest integer such that every sum of squares of $n$-ary $\O$-linear forms must be a sum of $g_{\O}(n)$ squares of $n$-ary $\O$-linear forms.  We show that when $K$ has class number 1, the growth of $g_{\O}(n)$ is at most an exponential of $\sqrt{n}$.  This extends the recent result obtained by Beli-Chan-Icaza-Liu on the growth of $g_{\mathbb Z}(n)$ and gives the first sub-exponential upper bound for $g_{\O}(n)$ for rings of integers $\O$ other than $\mathbb Z$.

\end{abstract}

\maketitle

\section{Introduction}

The question of determining which integers are sums of squares is a classical one dating back to the well-known work of Fermat, Euler, Legendre, and Lagrange.  A consequence of their work on sum of squares is that the Pythagoras number of the ring of integers $\Z$ is 4.   In their investigation of the Pythagoras numbers of affine algebras over a commutative ring $\O$, the authors of \cite{CDLR} show that these Pythagoras numbers are closely related to another arithmetic invariant $g_\O(n)$ of $\O$, the smallest number ($\leq \infty$) such that any sum of squares of $n$-ary $\O$-linear forms can be written as a sum of at most $g_\O(n)$ squares of such forms.  In the case over a global field $K$, it follows from the Hasse-Minkowski Principle \cite[Theorem 66:3]{OM} and the theory of representations of quadratic forms over local fields \cite[Theorem 63:21]{OM} that $g_K(n) = n + 3$ for all $n \geq 1$.  For more results on $g_K(n)$ when $K$ is an arbitrary field, the readers are referred to \cite{BLOP86} and \cite{CDLR}.

When $\O$ is a ring, very little work has been done on getting qualitative and quantitative results on $g_\O(n)$ before \cite{CDLR}, even in the case $\O = \Z$.  The earliest work in this direction may be due to Mordell \cite{Mordell30} and Ko \cite{Ko37} whose results together show that $g_\Z(n) = n + 3$ when $1 \leq n \leq 5$.  This can now be understood through the theory of integral representations of quadratic forms over local fields \cite{OM2, Riehm} and the fact that the class number of $I_m$, the integral quadratic form of sum of $m$ squares, is 1 when $m \leq 8$.  This local-to-global argument can be extended to show that $g_\O(n) = n + 3$ for all $n \geq 1$ when $\O$ is the ring of integers of a non-totally real number field, followed from the theory of spinor genus.  In light of these result one could ask if $g_\O(n)$ is always equal to $n + 3$.  In the case $\O = \Z$, this question was posed by Mordell in \cite{Mordell30} as a ``{\em new Waring's problem}".   Almost sixty years after Mordell posed his question, a negative answer is obtained by Kim-Oh in \cite{Kim-Oh97} where they show that $g_\Z(6) = 10$.  This is the last known exact value of $g_\O(n)$ thus far.  Nonetheless, the determination of $g_\O(n)$ remains an open interesting problem.

Since \cite{CDLR} there have been a lot of work devoted to obtaining upper bounds of $g_\O(n)$ when $\O$ is the ring of integers of a totally real number field.  Most notable is \cite{Icaza96} in which the second author of this paper shows that $g_\O(n)$ is always finite by finding an explicit upper bound on $g_\O(n)$ which is at least in the order of $n^{n^2}$ in the special case $\O = \Z$ already.  An exponential upper bound $g_\Z(n) = O(3^{n/2}n\, \log n)$ is obtained by Kim-Oh later in \cite{Kim-Oh05}.  A much better upper bound $g_\Z(n) = O(e^{(4 + 2\sqrt{2} + \epsilon)\sqrt{n}})$ for every $\epsilon > 0$ is obtained by us and our collaborators recently in \cite{BCIL}. The goal of this paper is to extend this result to rings of integers in totally real number fields of class number 1.

\begin{thm} \label{maintheorem}
Let $\O$ be the ring of integers of a totally real number field $K$ of class number $1$.  There exists constants $\kappa$ and $D$, depending only on $K$, such that
$$g_\O(n) \leq D\,e^{\kappa \sqrt{n}}.$$
\end{thm}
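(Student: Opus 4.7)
The plan is to follow the strategy of \cite{BCIL} for $\Z$, but working intrinsically over $\O$ rather than passing through $\Z$ by restriction of scalars (which would multiply the rank by $[K:\Q]$ and destroy the $\O$-linear form structure we need). I would proceed in three stages.

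\textbf{Stage 1: HKZ reduction over $\O$.} First I would develop HKZ and balanced HKZ reduction theories for positive definite $\O$-lattices. Since $K$ is totally real of class number one, every positive definite $\O$-lattice $L$ of rank $n$ is free, with Gram matrix in $\M_n(\O)$ and basis changes in $\GL_n(\O)$. The ``length'' of a nonzero $\bv \in L$ is the totally positive element $Q(\bv) \in \O$, and ``minimal'' should be taken in the sense of the totally positive ordering $\mint$ already fixed in the preamble. An HKZ-reduced basis would be constructed by choosing, recursively, a vector with totally minimal Gram value and continuing in the orthogonal complement. The output is a sequence of totally positive diagonal elements $c_1, \ldots, c_n \in K$ with $c_1 c_2 \cdots c_n = \det(L)$ and, in the balanced version, quantitative growth estimates of the shape $c_{i+1} \preceq \lambda_K(i)\,c_i$ for constants $\lambda_K(i)$ depending only on $K$ and $i$. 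Class number one is essential here: it guarantees free lattices and a clean $\O$-Gram-Schmidt procedure up to unimodular change.

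\textbf{Stage 2: Translation to a Waring bound.} Next I would observe that $f = \ell_1^2 + \cdots + \ell_m^2$ corresponds to a representation of the Gram $\O$-lattice $L_f$ by the standard form $I_m$. Proving $g_\O(n) \leq D\,e^{\kappa \sqrt n}$ is therefore equivalent to showing that every such $L_f$ of rank at most $n$ admits a representation by $I_g$ for some $g \leq D\,e^{\kappa \sqrt n}$. After a balanced HKZ change of basis one may assume $L_f$ is balanced HKZ-reduced, so its Gram entries are coordinatewise controlled at every real embedding. I would then peel off a minimal vector $\bv$ with $Q(\bv) = c_1$: because $c_1$ is a totally positive algebraic integer of bounded size, the local theory of sums of squares at each prime of $\O$, combined with the Hasse principle over $K$ (and class number one to trivialize any spinor obstruction), shows that the rank-one sublattice $\O\bv$ embeds into $I_{g'_1}$ with $g'_1$ polynomially bounded in the embeddings of $c_1$. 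Iterating on the orthogonal complement $\bv^\perp$ of rank $n-1$ yields $g \leq g'_1 + \cdots + g'_n$.

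\textbf{Stage 3: Counting.} Finally, substituting the balanced HKZ growth rate $c_i \preceq c_1\cdot \lambda_K^{\,i-1}$ into the polynomial bound from Stage 2 and optimizing the resulting telescoping sum exactly as in \cite{BCIL} produces a crossover at $i = O(\sqrt n)$ and the overall bound $g \leq D\,e^{\kappa \sqrt n}$, with $D$ and $\kappa$ absorbing all dependence on $[K:\Q]$ and $\mathrm{disc}(K)$.

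The hard part will be Stage 1: formulating a ``balanced'' HKZ reduction over $\O$ that is simultaneously compatible with every real embedding of $K$ and enjoys quantitative inequalities strong enough to drive the $\sqrt n$-economy in Stage 3. The interaction between the totally positive ordering on $\O$ and the $\O$-Gram-Schmidt process is subtler than its $\Z$-counterpart, since scalar adjustments now live in the larger unit group $\O^\times$, and constants coming from the discriminant of $K$ must not be allowed to pollute the exponent. Once the reduction theory is in place, the Waring counting argument of \cite{BCIL} should transfer with essentially formal modifications, precisely because class number one keeps the target lattices $I_g$ behaving as they do over $\Z$.
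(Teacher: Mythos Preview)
Your Stage 1 is broadly right in spirit and matches what the paper does (Sections 3.1--3.3 build exactly this balanced HKZ theory over $K$, using $\NormK(Q(\bx))$ rather than a totally-positive partial order for the minimum). But Stage 2 contains a genuine gap that breaks the argument.

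You propose to peel off a minimal vector $\bv$, represent the rank-one lattice $\O\bv$ by some $I_{g'_1}$, and then ``iterate on the orthogonal complement $\bv^\perp$ of rank $n-1$''. This does not work: the $\O$-lattice $L_f$ almost never splits as $\O\bv \perp (L_f \cap \bv^\perp)$, so a representation of $\O\bv$ by $I_{g'_1}$ and a representation of the complement by $I_{g'}$ do not combine to a representation of $L_f$ by $I_{g'_1+g'}$. If instead by ``orthogonal complement'' you mean the HKZ projection $\pi(L_f)$, that lattice is not integral (its outer coefficients $c_2,\ldots,c_n$ lie in $K$, not in $\O$), so the recursion cannot be run on it either. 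Note that if your scheme did work it would give $g_\O(n)\le n\cdot g_\O(1)$, which is far stronger than what is being claimed and not what the method can deliver.

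The paper's route is quite different. The heart is Proposition~\ref{backbone}: if $\min(Q)$ is \emph{large} (at least $D_5 e^{\xi\sqrt n}$), then the \emph{entire} form $Q$ is represented by $I_{6n+\frac{n(n-1)}{2}(d+5)}$. This is proved by writing the balanced-HKZ Gram matrix as $Q=P^tP+A+S$, where $P\in M_n(\O)$ is an integral approximation to the Cholesky factor $\sqrt H\,U$, $A$ is diagonal, and $S$ is a controlled symmetric error; Lemma~\ref{AplusS2} then represents $A+S$ by $I_{O(n^2)}$. The $\sqrt n$ in the exponent comes from the growth of the functions $\od(n)$ and $\oc(n)$ governing the balanced HKZ inequalities (see \eqref{alpha_m_bar} and Lemma~\ref{cm}), not from any ``crossover at $i=O(\sqrt n)$'' in a telescoping sum. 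The induction on $n$ in the final proof uses a different peeling: when $\min(Q)$ is \emph{small} and $Q=\sum_i(a_{i1}x_1+\cdots+a_{in}x_n)^2$, the equality $\min(Q)=\NormK(\sum_i a_{i1}^2)$ bounds the number of linear forms with nonzero $x_1$-coefficient by $\lfloor D_5 e^{\xi\sqrt n}\rfloor$; removing those drops to $n-1$ variables and yields $g_\O(n)\le D_5 e^{\xi\sqrt n}+g_\O(n-1)$.
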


One of the new ideas introduced in \cite{BCIL} is the {\em balanced Hermite-Korkin-Zolotarev (HKZ) reduction} of positive definite quadratic forms over $\Q$, which is a modification of the classical HKZ-reduction.  For our purpose in this paper, we need a reduction theory which puts a positive definite quadratic form $Q$ in $n$ variables over a totally real number field in a reduced form whose Gram matrix is $U^tHU$ with $U$ upper triangular unipotent, $H = \diag(h_1, \ldots, h_n)$ with $\min(Q) = h_1$ and $h_i/h_{i+1}$ bounded (i.e. bounded with all its conjugates by a constant depending only on $K$ and $n$).  The most well-known reduction theory of positive definite quadratic forms over number fields is due to Humbert \cite{Humbert} which is essentially the same as the one by A. Weil given in his lectures at the University of Chicago \cite{Weil}.   This reduction theory was later generalized by  Koecher \cite{ko} and became a special case of his reduction theory of  ``Positivit\"{a}tsbereichen".  All of these follow the main idea of Minkowski's reduction theory over $\mathbb Q$ which relies on the (successive) minima of the function $\TraceK(Q(\bx))$, where $\TraceK$ is the trace from the totally real field $K$ to $\mathbb Q$.  This new function is a positive definite quadratic form over $\mathbb Q$ and has only finitely many minimal vectors in $\mathbb Z^{[K : \mathbb Q]n}$. Hence in principle\footnote{Of course, this is still difficult in practice as finding the minimal vectors of a positive definite quadratic form is one of the most important and difficult problems in the arithmetic theory of quadratic forms and lattice-based cryptography.} one can find all these minimal vectors by a finite search.  However, when viewed as vectors in $\mathcal O^n$, these minimal vectors of $\TraceK(Q(\bx))$ may not be extended to a basis for $\mathcal O^n$.  Consequently,  even in the case when $K$ has class number 1, Humbert reduction only guarantees that $Q$ represents, but {\em is not necessary integrally equivalent to}, a reduced form with the entries of its Gram matrix satisfying the desired inequalities.

Our approach of reduction theory follows the classical HKZ-reduction theory.  Instead of using the trace we use the norm function $\NormK$ from $K$ to $\mathbb Q$.  The function $\NormK(Q(\bx))$ is no longer a quadratic form over $\mathbb Q$ and finding its minimal vectors in $\mathcal O^n$ is much more difficult.  Adding to the problem is that when $K$ is not $\mathbb Q$ there are infinitely many such minimal vectors due to the infinitude of the units in $\mathcal O$.  So, some care will be taken in choosing the right minimal vectors in our construction of the HKZ-reduced forms.  Nonetheless, when $K$ has class number 1, every minimal vector of $\NormK(Q(\bx))$ can be extended to a basis for $\mathcal O^n$, and  as a result  every positive definite quadratic form will be indeed integrally equivalent to a HKZ-reduced form in this special case.

The rest of the paper is organized as follows.  In Section 2 notations and terminologies that are used in the rest of the paper will be introduced.  In Section 3 we present the HKZ-reduction of a positive definite quadratic form and its balanced version  over a totally real number field.   Section 4 contains a few technical lemmas regarding the representations of positive semidefinite quadratic forms by a sum of small number of squares.  The proof of Theorem \ref{maintheorem} will be in Section 5, the last section of this paper.

\section{Notations and preliminaries}

Let $K$ be a totally real number field of degree $d$ over $\Q$ and $\O$ be its ring of integers.  The norm and trace from $K$ to $\Q$ are denoted by $\NormK$ and $\TraceK$, respectively.  The notation ``$\nu \mid \infty$" means that $\nu$ is an infinite place of $K$ which is an embedding of $K$ into $\R$.  The image of an element $a$ in $K$ under $\nu$ is denoted by $a^\pnu$.
Every infinite place $\nu$ of $K$ gives rise to an Archimedean valuation $\vert \,\,\vert_\nu$ on $K$ defined by $\vert a \vert_\nu: = \vert a^\pnu \vert$, where $\vert\,\,\vert$ is the usual absolute value on $\R$.

We will write $a \succ 0$ if $a$ is a totally positive element in $K$, i.e. $a^\pnu > 0$ for all $\nu \mid \infty$.  If $a$ and $b$ are in $K$, then $a \succ b$ means $a - b \succ 0$.  For any $a \in K$, let
$$ \lbase a \rbase: = \min \{\vert a \vert_\nu : \nu \mid \infty\} \quad \mbox{ and } \quad \lcrown a \rcrown : = \max\{\vert a \vert_\nu : \nu \mid \infty \}.$$

Let $K_\infty$ be the product $\prod_{\nu \mid \infty}K_\nu \cong \R^d$, where $K_\nu$ is a completion of $K$ with respect to $\vert \,\,\vert_\nu$.  Let
\begin{equation} \label{beta}
\beta := \inf \{ r > 0 : \forall (\alpha_\nu) \in K_\infty,  \exists a \in \O \mbox{ such that } \vert \alpha_\nu - a \vert_\nu < r,\, \forall \nu \mid \infty\}
\end{equation}
which is an invariant of $K$.  This $\beta$ exists and is a finite number because the image of $\O$ under the ring monomorphism $a \longmapsto (a^\pnu)$ is a full-rank $\Z$-lattice in $K_\infty$.  It is shown in \cite[Theorem 6]{OV} (see also the discussion thereafter) that $\beta \leq \frac{1}{2} \sqrt{\vert d_K \vert}$, where $d_K$ is the discriminant of $K$.  Let
\begin{equation} \label{cbeta}
{\mathcal B}_K: =  \{(\alpha_\nu) \in K_\infty : \vert \alpha_\nu \vert_\nu \leq \beta \mbox{ for all } \nu \mid \infty \}.
\end{equation}
It is clear that for every $\alpha \in K_\infty$, there exists $a \in \O$ such that $\alpha - a \in {\mathcal B}_K$.

Let $Q(x_1, \ldots, x_n)$ be a quadratic form in $n$ variables over $K$.  We always identify $Q$ with its Gram matrix.  By completing squares, we can write
$$Q(x_1, \ldots, x_n) = \sum_{i=1}^n h_i\left(x_i + \sum_{j=i+1}^n u_{ij}x_j\right)^{\!\!\!2}.$$
This is called the {\em Lagrange expansion} of $Q$.  The $h_i$ are called the outer coefficients of $Q$ and the $u_{ij}$ are called the inner coefficients of $Q$.  The Gram matrix of $Q$ can be written as $U^tHU$, where $H = \diag(h_1, \ldots, h_n)$ and $U$ is the unipotent upper triangular matrix whose entries above the main diagonal are the outer coefficients $u_{ij}$.

A quadratic form over $K$ is called integral if its Gram matrix has entries from $\O$.  The scale of a quadratic form over $K$ is the fractional ideal of $K$ generated by the entries of the Gram matrix of that quadratic form.   Let $A$ and $B$ be two integral quadratic forms over $K$ in $n$ and $m$ variables, respectively.  We say that $A$ is represented by $B$, written $A \rep B$, if there exists an $m\times n$ matrix $U$  over $\O$ such that $A = B[U]: = U^tBU$.  If $m = n$ and $U$ is in $\text{GL}_n(\O)$, then $A$ and $B$ are said to be integrally equivalent.

For any positive integer $m$, $I_m$ denotes the quadratic form of sum of $m$ squares.  It is clear that an integral quadratic form is represented by $I_m$ if and only if it can be written as a sum of at most $m$ squares of linear forms with coefficients from $\O$.

For each $\nu \mid \infty$, let $Q^\pnu$ be the quadratic form over $K_\nu$ obtained by applying $\nu$ to the coefficients of $Q$.  We call $Q$ positive definite if $Q^\pnu$ is positive definite for every $\nu$.  For any positive definite quadratic form $Q$ in $n$ variables over $K$, its minimum, denoted $\min(Q)$, is defined by
$$\min(Q): = \min\{\NormK(Q(\bx)): \0 \neq \bx \in \O^n\}.$$
A vector $\bx$ in $\O^n$ is called a minimal vector of $Q$ if $Q(\bx) = \min(Q)$.

For every quadratic form $Q$ over $K$, let $\det(Q)$ be the determinant of its Gram matrix and set $d(Q): = \NormK(\det(Q))$.  Let
$$\gamma_{n, K} := \sup_Q \frac{\min(Q)}{d(Q)^{\frac{1}{n}}},$$
where $Q$ runs over all positive definite quadratic forms in $n$ variables over $K$.  By \cite[Theorem 1]{Icaza97}, we have
$\gamma_{n,K}\leq\sigma_{n,K}$, with
$$\sigma_{n,K} = 4^d\omega_n^{-\frac{2d}{n}}\vert d_K \vert$$
where $\omega_n$ is the volume of the $n$-dimensional unit sphere.  It is well-known that
$$\omega_{n} = \pi^{\frac{n}{2}}\Gamma\left(\frac{n}{2}+1\right)^{-1} = \begin{cases}
\frac{\pi^{\frac{n}{2}}}{\left(\frac{n}{2}\right)!} & \mbox{ if $n$ is even},\\
    & \\
\frac{\pi^{\frac{n-1}{2}}\, 2^{n+1}\, \left(\frac{n+1}{2}\right)!}{(n+1)!} & \mbox{ if $n$ is odd}.
\end{cases}$$
It follows from standard estimates of $n!$ (see \cite{BCIL}, for example) that
\begin{equation}\label{hermiteconstant}
\sigma_{n, K} \leq \left(e^{-1 + \frac{1}{n}}\,n^{1 + \frac{1}{n}}\right)^d\, \vert d_K\vert, \quad \mbox{ for all $n \geq 1$}.
\end{equation}

Since $K$ will be clear from the context, we simply write $\sigma_n$ instead of $\sigma_{n,K}$.  Then,
for any positive definite quadratic form $Q$ in $n$ variables over $K$, we have
\begin{equation*}%\label{muf}
\min(Q) \leq \sigma_n \, d(Q)^{\frac{1}{n}}.
\end{equation*}
For any positive integer $m$, let
\begin{equation*}%\label{definealpham}
\alpha(m):=\sigma_{m+1}\prod_{k=2}^{m+1}\sigma_k^{\frac{1}{k-1}}. \nonumber
\end{equation*}
Following the proof of \cite[Lemma 3.5]{BCIL} and using \eqref{hermiteconstant}, we obtain
\begin{equation*} %\label{alpha_m}
\alpha(m) \leq D_1 \, \vert d_K \vert^{1 + \Sigma(m)}\, e^{\frac{d}{2}(\ln m)^2}= : \od(m),
\end{equation*}
where $D_1$ is a constant depending only on $d$ and $\Sigma(m) = 1 + \frac{1}{2} + \cdots + \frac{1}{m}$.  It is easy to see that $\od(m)$ is an increasing function of $m$.  We choose $D_1$ so that $\od(m) \geq 1$ for all positive integers $m$.  Since $\Sigma(m) \leq 1 + \ln(m)$ for all $m \geq 1$, we have
\begin{equation} \label{alpha_m_bar}
\od(m) \leq D_2\, e^{\sqrt{m}}
\end{equation}
for some constant $D_2$ depending only on $K$.

In later discussion it will be convenient and sometimes necessary to expand our discussion from quadratic forms to Humbert forms.  An $n\times n$ Humbert form over $K$ is a symmetric matrix in $M_n(K_\infty) = \bigoplus_{\nu \mid \infty} M_n(K_\nu) \cong M_n(\R)^d$.  If $S$ is a Humbert form over $K$, $S^\nu$ denotes its $\nu$-th component and we write $S = (S^\nu)$.  If $\alpha$ is a real number and $S$ is a Humbert form, then $\alpha S$ is the Humbert form with $\alpha S^\nu$ as its $\nu$-th component.

A quadratic form $Q$ over $K$ in $n$ variables can be viewed as an $n\times n$ Humbert form whose $\nu$-th component is simply $Q^\pnu$.  A Humbert form is said to be positive definite if all its  components are positive definite.  One advantage of using Humbert forms is that we can take the positive definite square root of any diagonal matrix $H = \diag(h_1, \ldots, h_n)$ with totally positive diagonal entries from $K$: it is simply the Humbert form $\sqrt{H}$ with $\sqrt{H}^\nu = \diag\left(\sqrt{h_1^\pnu}, \ldots, \sqrt{h_n^\pnu}\right)$ for every $\nu \mid \infty$.

If $S$ is an $n\times n$ Humbert form over $K$ and $X \in M_n(K_\infty)$, then $S[X]$ is the Humbert form $X^tSX$.  Two $n\times n$ Humbert forms $S_1$ and $S_2$ over $K$ are {\em integrally equivalent} if $S_2 = S_1[T]$ for some $T \in \text{GL}_n(\O)$.  This generalizes the classical notion of integrally equivalence of quadratic forms over $K$.

\section{Reduction of quadratic forms}

%\subsection{HKZ-reduced forms}

Classically, a positive definite real quadratic form $Q$ in $n$ variables is called HKZ-reduced if its inner and outer coefficients satisfy
$$h_1 = \min(Q), \quad 0 < h_i < \frac{4}{3}h_{i+1}, \quad 1 \leq i \leq n-1$$
and
$$\vert u_{ij} \vert \leq \frac{1}{2}, \quad 1 \leq i, j \leq n.$$
The classical HKZ-reduction theory says that every positive definite real quadratic form is integrally equivalent to a HKZ-reduced form.

In this section, we will present a generalization of HKZ-reduction for positive definite quadratic forms over a totally real number field $K$ of degree $d$ over $\mathbb Q$.  We will also generalize the notion of a balanced HKZ-reduced form introduced recently in \cite{BCIL}.

\subsection{HKZ-reduced forms--definitions}

We begin this subsection by stating a couple of technical lemmas.

\begin{lem} \label{tracenorm}
There exists a constant $D_3$, depending only on $K$, such that for every totally positive element $h$ of $K$, there exists $z \in \O\setminus\{0\}$ with
$$\TraceK(hz^2) \leq D_3\, \NormK(h)^{\frac{1}{d}}.$$
\end{lem}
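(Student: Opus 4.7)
The plan is to view $q(z) := \TraceK(h z^2)$ as a positive definite quadratic form of rank $d$ on the $\Z$-lattice $\O$, and then apply the classical Hermite bound for rational quadratic forms. The totally positive hypothesis on $h$ is precisely what makes $q$ positive definite on $\O$.

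First, I would fix a $\Z$-basis $\omega_1, \ldots, \omega_d$ of $\O$ and write any $z \in \O$ as $z = \sum_j z_j \omega_j$ with $z_j \in \Z$. Expanding by places,
$$q(z) = \sum_{\nu \mid \infty} h^\pnu (z^\pnu)^2 = \sum_{\nu \mid \infty} h^\pnu \left( \sum_{j} z_j \, \omega_j^\pnu \right)^{\!2}.$$
Since $h^\pnu > 0$ for every $\nu$, and since $z \mapsto (z^\pnu)_\nu$ embeds $\O$ as a full-rank $\Z$-lattice in $K_\infty$, this quantity is strictly positive whenever $z \neq 0$. Hence $q$ is a positive definite $d$-ary quadratic form over $\Q$.

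Second, I would compute its determinant. Write $M := (\omega_j^\pnu)_{\nu, j}$ and $D := \diag(h^\pnu)_{\nu \mid \infty}$; then the Gram matrix of $q$ in the basis $(\omega_j)$ equals $G = M^t D M$, so
$$\det(G) = \det(M)^2 \prod_{\nu \mid \infty} h^\pnu = d_K \cdot \NormK(h),$$
where the identity $\det(M)^2 = d_K$ is the standard expression for the discriminant of the totally real field $K$.

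Finally, invoking Hermite's inequality on $q$ yields a nonzero $z \in \O$ with
$$\TraceK(h z^2) \;=\; q(z) \;\leq\; \gamma_d \, \det(G)^{1/d} \;=\; \gamma_d \, d_K^{1/d} \, \NormK(h)^{1/d},$$
where $\gamma_d$ is the classical Hermite constant in dimension $d$; setting $D_3 := \gamma_d \, d_K^{1/d}$, which depends only on $K$, finishes the proof. There is no real obstacle here: the crux is the trace-form determinant identity together with a single invocation of Hermite's bound, and the latter could equivalently be replaced by the explicit constant $\sigma_d$ already recorded in Section~2.
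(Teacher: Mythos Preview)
Your proof is correct and is essentially the same approach as the paper's: the paper embeds $\O$ in $K_\infty$ and applies Minkowski's convex body theorem to the ellipsoid $\sum_\nu \frac{h^\pnu}{\NormK(h)^{1/d}} x_\nu^2 \leq r$, which is exactly the Hermite inequality you invoke after computing $\det(G)=d_K\cdot\NormK(h)$. Your version has the mild advantage of making the constant explicit as $D_3=\gamma_d\,d_K^{1/d}$.
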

\begin{proof}
We identify $\O$ as a $\Z$-lattice on $K_\infty \cong \R^d$  of volume $\sqrt{\vert d_K \vert}$ through the embedding $a \mapsto (a^\pnu)$.  Minkowski's theorem on convex bodies implies that any $\0$-symmetric compact convex body in $K_\infty \cong \R^d$ of volume at least $2^d\sqrt{\vert d_K\vert}$ must contain a nonzero element of $\O$.  This applies to ellipsoids of the type $\{\bx \in \R^d: \sum_{i=1}^d a_ix_i^2 \leq r\}$ where $a_1, \ldots, a_d$ are positive real numbers such that $a_1\cdots a_d = 1$.   So, there must be a positive real number $D_3$, depending only on $K$, and a nonzero element $z$ of $\O$ such that
$$\sum_{\nu \mid \infty} \frac{h^\pnu}{\NormK(h)^{1/d}} \,(z^2)^\pnu \leq D_3$$
which is what we need to show.
\end{proof}

\begin{lem} \label{reduced_elements}
There exists a constant $\lambda$, depending only on $K$, with the following property: for every totally positive element $h$ of $\O$ there is a unit $\epsilon$ of $\O$ such that
\begin{equation*} %\label{lambda}
(\epsilon^2h)^\pnu \leq \lambda\, (\epsilon^2h)^\pmu
\end{equation*}
for any two real embeddings $\mu, \nu$ of $K$.
\end{lem}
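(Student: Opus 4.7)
The plan is to reduce the statement to Dirichlet's unit theorem via the logarithmic embedding
$$\ell : K^\times \longrightarrow \R^d, \qquad a \longmapsto (\log \vert a \vert_\nu)_{\nu \mid \infty}.$$
By Dirichlet's unit theorem, $\ell(\O^\times)$ is a lattice of full rank in the hyperplane $H = \{x \in \R^d : \sum_\nu x_\nu = 0\}$, and consequently so is $\Lambda := \ell(\{\epsilon^2 : \epsilon \in \O^\times\}) = 2\,\ell(\O^\times)$. I would fix once and for all a bounded fundamental domain $F$ for $\Lambda$ inside $H$, and let $R$ denote its diameter; $R$ depends only on $K$.

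For a totally positive $h \in \O$, I decompose $\ell(h)$ orthogonally as
$$\ell(h) = \pi(\ell(h)) + \tfrac{1}{d} \log \NormK(h) \cdot \bm{1},$$
where $\pi$ is the orthogonal projection onto $H$ and $\bm{1} = (1, \ldots, 1)$; this uses that $h^\pnu > 0$ for every $\nu$, so $\sum_\nu \log h^\pnu = \log \NormK(h)$. Since $\Lambda$ spans $H$, one can choose a unit $\epsilon \in \O^\times$ such that $\pi(\ell(h)) + 2\,\ell(\epsilon) \in F$. Because $\ell(\epsilon^2) = 2\,\ell(\epsilon) \in H$, this gives
$$\ell(\epsilon^2 h) = \bigl[\pi(\ell(h)) + 2\,\ell(\epsilon)\bigr] + \tfrac{1}{d} \log \NormK(h) \cdot \bm{1}.$$
The first summand on the right has coordinates that lie in $F$, so they differ pairwise by at most $R$, while the second summand is constant across coordinates. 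Hence any two coordinates of $\ell(\epsilon^2 h)$ differ by at most $R$, and exponentiating yields $(\epsilon^2 h)^\pnu \leq e^R (\epsilon^2 h)^\pmu$ for all $\nu, \mu \mid \infty$. We may therefore take $\lambda = e^R$. Notice that $\epsilon^2 h$ is automatically totally positive, which is why the statement is phrased in terms of $\epsilon^2$ rather than $\epsilon$.

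The only nontrivial input is Dirichlet's unit theorem; everything else is a standard covering argument in the hyperplane $H$, so I do not foresee a genuine obstacle. The proof is robust in $h$: the component of $\ell(h)$ along $\bm{1}$ shifts every coordinate equally and is therefore harmless, while the component $\pi(\ell(h))$ in $H$ is absorbed by a single translate from $\Lambda$. In particular the argument does not require $h$ to be a unit, and the same bound $\lambda$ works uniformly for all totally positive $h \in \O$ (indeed for all totally positive $h \in K$).
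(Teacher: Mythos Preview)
Your proof is correct and takes a genuinely different route from the paper's. You go directly through Dirichlet's unit theorem and the logarithmic embedding: project $\ell(h)$ onto the trace-zero hyperplane $H$, translate into a fixed fundamental domain for the lattice $2\,\ell(\O^\times)$, and read off a uniform bound on the pairwise differences of the coordinates. This is the clean, standard argument, and it makes transparent why only $K$ (through the geometry of the unit lattice) enters into $\lambda$.

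The paper instead argues more concretely: it picks $y_0 \in \O\setminus\{0\}$ minimizing $\TraceK(hy^2)$, uses the special units $\epsilon_\mu$ with $|\epsilon_\mu|_\mu > 1$ and $|\epsilon_\mu|_\sigma < 1$ for $\sigma \neq \mu$ (an ingredient from the \emph{proof} of Dirichlet's theorem rather than its statement) to show the conjugates of $hy_0^2$ are mutually comparable, and then invokes Lemma~\ref{tracenorm} to bound $|\NormK(y_0)|$, so that $y_0$ is an associate of an element in a fixed finite set $\mathcal Y$. The payoff is an explicit formula~\eqref{lambda} for $\lambda$ in terms of these units and $\mathcal Y$; your argument yields $\lambda = e^R$ with $R$ the diameter of a fundamental domain, which is equally good for the paper's purposes since only the existence of $\lambda$ is ever used downstream. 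One small quibble: your claim that the coordinates of a point in $F$ ``differ pairwise by at most $R$'' is literally correct only if $R$ is taken to be the $\ell^\infty$-diameter of $F$ (or if $0\in F$), but since $F$ is bounded and the precise constant is irrelevant, this is harmless.
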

\begin{proof}
Let $h$ be a totally positive element in $\O$.  Let $y_0$ be a nonzero element in $\O$ such that
$$\TraceK(hy_0^2) = \min_{y \in \O\setminus\{0\}} \TraceK(hy^2).$$
For any $\mu \mid \infty$, by \cite[33:8]{OM} there exists a unit $\epsilon_\mu$ such that
$$\vert \epsilon_\mu \vert_\mu > 1, \quad \vert \epsilon_\mu\vert_\sigma< 1 \quad \mbox{ for all } \sigma \neq \mu.$$
Then, since $\TraceK(hy_0^2) \leq \TraceK(h(y_0\epsilon_\mu)^2)$, we have
$$(hy_0^2)^\pmu(\vert \epsilon_\mu \vert_\mu^2 - 1)  + \sum_{\sigma \neq \nu, \mu}(hy_0^2)^{(\sigma)} (\vert \epsilon_\mu \vert_\sigma^2 - 1) \geq (hy_0^2)^\pnu (1 - \vert \epsilon_\mu \vert_\nu^2).$$
On the left-hand side of this inequality, every term within the summation sign is negative.  Thus,
$$(hy_0^2)^\pnu \leq C\, (hy_0^2)^\pmu, \quad \mbox{ where } C: = \max_\mu \max_{\nu \neq \mu} \frac{\vert \epsilon_\mu \vert_\mu^2- 1}{1 - \vert \epsilon_\mu\vert_\nu^2}.$$

Now, for every $x \in \O\setminus \{0\}$,
$$\TraceK(hy_0^2) \leq \TraceK(hx^2) \leq \TraceK(hy_0^2)\, \TraceK((y_0^{-1}x)^2)$$
because $hy_0^2$ and $(y_0^{-1}x)^2$ are totally positive.  Therefore, $\TraceK((y_0^{-1}x)^2)$ is at least 1 for all $x \in \O\setminus \{0\}$.  By Lemma \ref{tracenorm}, there must be a nonzero element $x_0$ of $\O$ such that
$$1 \leq \TraceK((y_0^{-1}x_0)^2)  \leq D_3\, \NormK(y_0^{-2})^{\frac{1}{d}},$$
which means that $\vert \NormK(y_0)\vert \leq D_3^{\frac{d}{2}}$.  Hence, there is a finite subset $\mathcal Y$ of $\O$, depending only on $K$, such that $y_0$ is an associate of an element in $\mathcal Y$.  Let
\begin{equation} \label{lambda}
\lambda: = C\, \left(\max_{y \in \mathcal Y}\max_{\nu, \mu} \frac{y^\pnu}{y^\pmu}\right)^2.
\end{equation}
This $\lambda$ is a constant depending only on $K$.  Let $\epsilon$ be a unit of $\O$ such that $\epsilon^{-1} y_0 \in \mathcal Y$.  Then,
$$(\epsilon^2 h (y_0\epsilon^{-1})^2)^\pnu \leq C (\epsilon^2 h (y_0\epsilon^{-1})^2)^\pmu,$$
and therefore,
\begin{equation*}
(\epsilon^2h)^\pnu \leq \lambda (\epsilon^2 h)^\pmu
\end{equation*}
which is what we need to prove.
\end{proof}

\begin{defn} \label{weakly-reduced-definition}
Let $K$ be a totally real number field.  A positive definite quadratic form in $n$ variables over $K$ is said to be {\em weakly reduced} if its outer coefficients $h_1, \ldots, h_n$ satisfy:
\begin{enumerate}
\item $\NormK(h_1) = \min(Q)$ and  $\NormK(h_i) \leq \od(j-i) \NormK(h_j)$ for any $1\leq i < j \leq n$,

\item $h_i^{\pnu} \leq \lambda h_i^{\pmu}$ for any  $1 \leq i \leq n \mbox{ and } \mu, \nu \mid \infty$.
\end{enumerate}
Here $\od$ and $\lambda$ are defined in \eqref{alpha_m_bar} and \eqref{lambda} respectively.
\end{defn}

\begin{rmk}
In the case $K = \mathbb Q$, the definition of weakly reduced quadratic forms presented above is slightly different from the one given in \cite[Definition 3.2]{BCIL}.  However, it is clear from \cite[Lemma 3.4]{BCIL} that these two definitions are indeed the same in this special case.
\end{rmk}

\begin{lem}
The outer coefficients of a weakly reduced quadratic form in $n$ variables over $K$ satisfy the inequalities
\begin{equation} \label{hihj2}
h_i^\pnu \leq \lambda^2\, \od(j-i)^{\frac{1}{d}}\, h_j^\pmu
\end{equation}
for any $1 \leq i < j \leq n$ and $\nu, \mu \mid \infty$.
\end{lem}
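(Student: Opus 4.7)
The plan is to pass through the norm $\NormK(h_k)$ as an intermediate quantity, combining condition (1) of the definition (which controls norms) with condition (2) (which controls the ratios between conjugates of a single $h_k$). Since condition (2) says every pair of conjugates of $h_i$ differs by at most a factor of $\lambda$, one obtains a two-sided comparison between a single conjugate $h_i^\pnu$ and the norm $\NormK(h_i)^{1/d}$.

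More concretely, first I would fix $i$ and any embedding $\nu$, and use the inequality $h_i^\pnu \leq \lambda\, h_i^{(\sigma)}$ for every $\sigma \mid \infty$ from condition (2) to bound
$$\NormK(h_i) = \prod_{\sigma \mid \infty} h_i^{(\sigma)} \geq \lambda^{-(d-1)}\,(h_i^\pnu)^d,$$
which rearranges to $h_i^\pnu \leq \lambda^{(d-1)/d}\,\NormK(h_i)^{1/d}$. A symmetric argument, applied to $h_j$ with embedding $\mu$, gives the reverse estimate
$$\NormK(h_j)^{1/d} \leq \lambda^{(d-1)/d}\, h_j^\pmu.$$

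Next I would insert the bound from condition (1), namely $\NormK(h_i) \leq \od(j-i)\,\NormK(h_j)$, between these two inequalities. Chaining them yields
$$h_i^\pnu \;\leq\; \lambda^{(d-1)/d}\,\NormK(h_i)^{1/d} \;\leq\; \lambda^{(d-1)/d}\,\od(j-i)^{1/d}\,\NormK(h_j)^{1/d} \;\leq\; \lambda^{2(d-1)/d}\,\od(j-i)^{1/d}\,h_j^\pmu.$$
Since $\lambda \geq 1$ and $2(d-1)/d \leq 2$, the exponent $\lambda^{2(d-1)/d}$ can be weakened to $\lambda^2$, producing exactly \eqref{hihj2}.

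There is no real obstacle here; the only care needed is verifying that $\lambda \geq 1$, which follows immediately from its definition in \eqref{lambda} (the maximum of ratios $y^\pnu/y^\pmu$ over a finite set, where one may take $\nu = \mu$ to see the maximum is at least $1$; similarly $C \geq 1$ from the argument in the proof of Lemma \ref{reduced_elements}). Thus the lemma reduces to a routine chain of inequalities combining the two defining conditions of weak reducedness.
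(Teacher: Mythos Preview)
Your argument is correct. The paper takes a slightly different path: from the norm inequality $\NormK(h_i)\leq\od(j-i)\,\NormK(h_j)$ it pigeonholes a single embedding $\nu_0$ at which $h_i^{(\nu_0)}\leq\od(j-i)^{1/d}h_j^{(\nu_0)}$, and then applies condition~(2) twice to pass from $\nu_0$ to the arbitrary $\nu$ and $\mu$, picking up one factor of $\lambda$ each time. Your route instead compares each conjugate $h_k^{(\sigma)}$ directly to $\NormK(h_k)^{1/d}$ via condition~(2), which has the minor advantage of producing the sharper exponent $\lambda^{2(d-1)/d}$ before you relax it to $\lambda^2$. Both arguments are equally elementary and use the same two ingredients; the difference is purely in the order of combination.
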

\begin{proof}
It follows from condition (1) in Definition \ref{weakly-reduced-definition} that there must be a $\nu_0 \mid \infty$ such that
$$h_i^{(\nu_0)} \leq \od(j-i)^{\frac{1}{d}}h_j^{(\nu_0)}.$$
Then, for all $\nu$ and $\mu$,
\begin{eqnarray*}
h_i^\pnu & \leq & \lambda\, h_i^{(\nu_0)}\\
    & \leq & \lambda\, \od(j-i)^{\frac{1}{d}}\, h_j^{(\nu_0)}  \\
    & \leq & \lambda^2\, \od(j-i)^{\frac{1}{d}}\, h_j^\pmu
\end{eqnarray*}
as claimed.
\end{proof}

In later discussion it will be more convenient to use inequalities weaker than \eqref{hihj2} but valid for the case $i = j$.

\begin{cor} \label{hihj3}
The outer coefficients of a weakly reduced quadratic form in $n$ variables over $K$ satisfy the inequalities
$$h_i^\pnu \leq \lambda^2\, \od(n)^{\frac{1}{d}}\, h_j^\pmu$$
for $1 \leq i \leq j \leq n$ and $\nu, \mu \mid \infty$.
\end{cor}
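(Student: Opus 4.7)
The plan is to split the statement according to whether $i<j$ or $i=j$.

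In the case $1 \leq i < j \leq n$, the desired inequality is just a weakening of \eqref{hihj2}. Since $1 \leq j-i \leq n$ and $\od$ is an increasing function (as noted immediately after its definition), one has $\od(j-i)^{1/d} \leq \od(n)^{1/d}$, so the bound $h_i^\pnu \leq \lambda^2\,\od(j-i)^{1/d}\,h_j^\pmu$ furnished by the preceding lemma at once yields $h_i^\pnu \leq \lambda^2\,\od(n)^{1/d}\,h_j^\pmu$. No further work is needed in this range.

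For the diagonal case $i=j$, the right-hand side compares $h_i^\pnu$ with $h_i^\pmu$, so the natural input is condition (2) of Definition \ref{weakly-reduced-definition}, which directly gives $h_i^\pnu \leq \lambda\,h_i^\pmu$. To absorb this into the stated bound it is enough to observe that $\lambda \geq 1$ and $\od(n) \geq 1$, so $\lambda \leq \lambda^2\,\od(n)^{1/d}$. The second inequality is built into the construction of $\od$: the constant $D_1$ in the definition of $\od$ is chosen precisely so that $\od(m) \geq 1$ for every positive integer $m$. The first is an immediate consistency check on $\lambda$, for applying condition (2) with the roles of $\mu$ and $\nu$ exchanged gives the symmetric inequality $h_i^\pmu \leq \lambda\,h_i^\pnu$, and chaining this with $h_i^\pnu \leq \lambda\,h_i^\pmu$ forces $\lambda^2 \geq 1$, hence $\lambda \geq 1$.

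In effect, the corollary packages the previous lemma in a uniform form that enlarges the bounding constant from $\od(j-i)^{1/d}$ to $\od(n)^{1/d}$ (making the exponent independent of the pair $(i,j)$) and extends the range to include the diagonal $i=j$. There is no genuine obstacle: the sole bookkeeping step is the trivial verification that $\lambda \geq 1$, which is automatic from the symmetric nature of condition (2) of Definition \ref{weakly-reduced-definition}.
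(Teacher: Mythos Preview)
Your proof is correct and follows essentially the same approach as the paper's: split into $i<j$ (use the preceding lemma and monotonicity of $\od$) and $i=j$ (use condition~(2) of Definition~\ref{weakly-reduced-definition} together with $\lambda\geq 1$ and $\od(n)\geq 1$). The only cosmetic difference is that the paper cites Lemma~\ref{reduced_elements} for $\lambda\geq 1$, whereas you derive it from the symmetry of condition~(2); either way, simply taking $\mu=\nu$ there already gives $\lambda\geq 1$ in one step.
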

\begin{proof}
This is clear when $i < j$ as $\od$ is a nondecreasing function on the positive integers.  It follows from Lemma \ref{reduced_elements} that the constant $\lambda$ must be at least 1.  The quantity $\od(n)$ is also at least 1 by definition.  So, by Definition \ref{weakly-reduced-definition},
$$h_i^\pnu \leq \lambda\, h_i^\pmu \leq \lambda^2\, \od(n)^{\frac{1}{d}}\, h_i^\pmu$$
for any $\nu, \mu \mid \infty$.
\end{proof}

The sole purpose of introducing the weakly reduced quadratic forms is to control the growth of the outer coefficients.  As is done in the classical case,  the reduction can go a step further and arrive at a quadratic form whose inner coefficients are bounded.

\begin{defn} \label{hkz-definition}
Let $K$ be a totally real number field.  A positive definite quadratic form in $n$ variables over $K$ is called {\em HKZ-reduced} if it is weakly reduced and its inner coefficients are lying inside the set $\mathcal B_K$ defined in \eqref{cbeta}.
\end{defn}

\begin{prop} \label{hermite-inequality}
There are constants $\delta, C_1, \ldots, C_n$, where $\delta$ depends only on $n$ and $K$ and each $C_j$ depends only on $j$ and $K$, such that if $Q = [a_{ij}]$ is a HKZ-reduced quadratic form in $n$ variables over $K$, then for any $j = 1, \ldots, n$ and $\nu \mid \infty$,
$$h_j^\pnu \leq a_{jj}^\pnu \leq C_j\,h_j^\pnu \quad \mbox{ and } \quad \det(Q)^\pnu \leq a_{11}^\pnu\cdots a_{nn}^\pnu \leq \delta\, \det(Q)^\pnu.$$
\end{prop}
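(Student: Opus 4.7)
The plan is to work directly from the Lagrange expansion $Q = U^t H U$ with $H = \diag(h_1, \ldots, h_n)$ and $U$ unipotent upper triangular with inner coefficients $u_{ij}$. Expanding the diagonal entry gives
\begin{equation*}
a_{jj} = h_j + \sum_{i=1}^{j-1} h_i\, u_{ij}^2,
\end{equation*}
so after applying any real embedding $\nu$, the term $h_j^\pnu$ is isolated while every other summand $h_i^\pnu (u_{ij}^\pnu)^2$ is nonnegative. The lower bound $h_j^\pnu \leq a_{jj}^\pnu$ is then immediate.

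For the upper bound I would use the two HKZ hypotheses in tandem. Since $Q$ is HKZ-reduced, each $u_{ij}$ lies in $\mathcal B_K$ and therefore $|u_{ij}^\pnu|^2 \leq \beta^2$ for every $\nu \mid \infty$. To control $h_i^\pnu$ in terms of $h_j^\pnu$ I would invoke inequality \eqref{hihj2} (specialized to $\mu = \nu$) rather than Corollary \ref{hihj3}, since the former keeps $\od(j-i)$ instead of $\od(n)$ and hence yields a constant independent of $n$. Combining these gives
\begin{equation*}
a_{jj}^\pnu \;\leq\; h_j^\pnu\left(1 + \beta^2 \lambda^2 \sum_{i=1}^{j-1} \od(j-i)^{1/d}\right),
\end{equation*}
so one may set $C_j := 1 + \beta^2 \lambda^2 \sum_{i=1}^{j-1} \od(j-i)^{1/d}$, which depends only on $j$ and on $K$ through $\beta$, $\lambda$ and $\od$.

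For the determinant bounds, the unipotency of $U$ gives $\det(Q) = \det(H) = h_1\cdots h_n$, hence $\det(Q)^\pnu = h_1^\pnu\cdots h_n^\pnu$. Multiplying the componentwise bounds $h_j^\pnu \leq a_{jj}^\pnu \leq C_j h_j^\pnu$ across $j = 1, \ldots, n$ produces
\begin{equation*}
\det(Q)^\pnu \;\leq\; a_{11}^\pnu\cdots a_{nn}^\pnu \;\leq\; (C_1\cdots C_n)\, \det(Q)^\pnu,
\end{equation*}
so $\delta := C_1 \cdots C_n$ works, and it depends only on $n$ and $K$ as required.

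There is no real obstacle here beyond being careful to cite the sharper form of the $h_i$-comparison (with $\od(j-i)$) so that $C_j$ does not drag in a factor of $\od(n)$. Everything else is bookkeeping: the expansion of $(U^t H U)_{jj}$, the positivity of the extra summands for the lower bound, and the uniform bound $|u_{ij}^\pnu| \leq \beta$ coming from $\mathcal B_K$ for the upper bound.
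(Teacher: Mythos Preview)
Your proof is correct and follows essentially the same route as the paper: expand $a_{jj} = h_j + \sum_{i<j} h_i u_{ij}^2$, use positivity of the $h_i$ for the lower bound, and combine the bound $|u_{ij}^\pnu|\leq\beta$ from $\mathcal B_K$ with inequality \eqref{hihj2} for the upper bound, then take $\delta = C_1\cdots C_n$. The only cosmetic difference is that the paper further bounds $\sum_{i=1}^{j-1}\od(j-i)^{1/d}$ by $(j-1)\,\od(j-1)^{1/d}$ via monotonicity of $\od$, while you keep the full sum; either choice gives a constant depending only on $j$ and $K$.
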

\begin{proof}
Let $Q = [a_{ij}]$ be a HKZ-reduced quadratic form in $n$ variables over $K$.  It follows from the definition that for $j = 1, \ldots, n$,
$$a_{jj} = h_j + \sum_{i=1}^{j-1} h_i u_{ij}^2.$$
Since each $h_i$ is totally positive, therefore $h_j^\pnu \leq a_{jj}^\pnu$.

It is clear that we may take $C_1$ to be 1 as $h_1 = a_{11}$ by definition. For $j = 2, \ldots, n$, from the definition of the set $\mathcal B_K$ and \eqref{hihj2}, we have
$$a_{jj}^\pnu \leq h_j^\pnu + \beta^2\,\lambda^2\,\sum_{i=1}^{j-1} \od(j-i)^{\frac{1}{d}}\,h_j^\pnu.$$
Since the function $\od$ is nondecreasing, we could take $C_j$ to be $1 + \beta^2\,\lambda^2\,\od(j-1)^{\frac{1}{d}}\,(j-1)$.  Now, let $\delta = C_1\cdots C_n$.  It is clear that $h_1\cdots h_n = \det(Q)$.  Thus,
$$h_1^\pnu \cdots h_n^\pnu \leq a_{11}^\pnu \cdots a_{nn}^\pnu \leq \delta\, \det(Q)^\pnu.$$
\end{proof}

\subsection{HKZ-reduced forms--existence}

In this subsection we will conduct the discussion in the language of quadratic spaces and lattices.  Unless specified otherwise, the quadratic map on any quadratic space over $K$ will be simply denoted by $Q$.  We use $B$ to denote the associated bilinear form defined by $B(\bx, \by) = \frac{1}{2}(Q(\bx + \by) - Q(\bx) - Q(\by))$. If $\Lambda$ is a free $\O$-lattice with a quadratic map $Q$, then the Gram matrix with respect to a basis $\bu_1, \ldots, \bu_n$ is the matrix $(B(\bu_i, \bu_j))$, which is the Gram matrix of a quadratic form associated to $\Lambda$.  Different bases for $\Lambda$ yield integrally equivalent quadratic forms, and this sets up an one-to-one correspondence between isometry classes of free $\mathcal O$-lattices and integral equivalence classes of quadratic forms over $K$.  For more on this correspondence and discussion of quadratic forms from this geometric perspective, the readers may consult \cite{OM}.

For any nonzero vector $\bu$ in the underlying space of an $\O$-lattice $\Lambda$, its coefficient ideal relative to $\Lambda$ is the set
$$\mathfrak a_{\bu}(\Lambda): = \{ \alpha \in K : \alpha \bu \in \Lambda \}.$$
It is a fractional ideal of $K$, and $K\bu \cap \Lambda = \mathfrak a_{\bu}(\Lambda)\bu$.  Thus, the annihilator of $K\bu \cap \Lambda$ modulo $\O\bu$ is $\mathfrak a_{\bu}(\Lambda)^{-1}$.  If, in addition, $\bu$ is in $\Lambda$, then $\mathfrak a_{\bu}(\Lambda)$ is the inverse of an integral ideal.   A vector $\bu$ in $\Lambda$ is called {\em unimodular} (or {\em maximal} in \cite{OM} or primitive in some literature) if $\mathfrak a_{\bu}(\Lambda)$ is $\O$.  In that case, $\O\bu$ is a direct summand of $\Lambda$.

\begin{lem} \label{build_basis}
Let $\bu_1$ be a nonzero vector in an $\O$-lattice $\Lambda$, and $\pi: K\Lambda \longrightarrow K\Lambda$ be the orthogonal projection on the orthogonal complement of $K\bu_1$.  Suppose that $\bv_2, \ldots, \bv_n$ are linearly independent vectors in $\pi(\Lambda)$.  Let $\bu_2, \ldots, \bu_n$ be vectors in $\Lambda$ such that $\pi(\bu_j) = \bv_j$ for all $j \geq 2$.
\begin{enumerate}
\item The vectors $\bu_1, \bu_2, \ldots, \bu_n$ are linearly independent.

\item If $\bu_1$ is a unimodular vector in $\Lambda$ and $\{\bv_2, \ldots, \bv_n\}$ is a basis for $\pi(\Lambda)$, then $\{\bu_1, \bu_2, \ldots, \bu_n\}$ is a basis for $\Lambda$.

\item For $j = 2, \ldots, n$, $\mathfrak a_{\bu_j}(\Lambda) \subseteq \mathfrak a_{\bv_j}(\pi(\Lambda))$.

\item If $\mathfrak a_{\bu_1}(\Lambda) = \mathfrak b^{-1}$ and $\mathfrak b'$ is the annihilator of $\pi(\Lambda)$ modulo $\O\bv_2 \oplus \cdots \oplus \O\bv_n$, then $\mathfrak b \mathfrak b'$ is contained in the annihilator of $\Lambda$ modulo $\O\bu_1 \oplus \cdots \oplus\O\bu_n$.
\end{enumerate}
\end{lem}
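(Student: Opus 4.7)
My plan rests on one unifying observation: the projection $\pi$ satisfies $\pi(\bu_1) = 0$ and $\pi(\bu_j) = \bv_j$ for $j \geq 2$, and its kernel inside $K\Lambda$ is exactly $K\bu_1$ (this is where the standing nondegeneracy of $\bu_1$ in the positive definite setting is used). Intersecting with $\Lambda$ gives the identity
$$\ker\pi \cap \Lambda \;=\; K\bu_1 \cap \Lambda \;=\; \mathfrak a_{\bu_1}(\Lambda)\,\bu_1,$$
which I plan to invoke in every part. The common recipe is: apply $\pi$ to an equation in $\Lambda$, solve it in the $\bv_j$-basis of $\pi(\Lambda)$, then lift back to $\Lambda$ and use the kernel identity to pin down the $\bu_1$-component.

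For (1), I take a dependence relation $\sum_i c_i\bu_i = 0$, apply $\pi$ to obtain $\sum_{j\geq 2}c_j\bv_j = 0$, conclude $c_j = 0$ for $j \geq 2$ from the assumed linear independence of the $\bv_j$, and finally get $c_1 = 0$ from $\bu_1 \neq 0$. Part (2) is the same idea promoted to a spanning statement: for any $\bw\in\Lambda$, I write $\pi(\bw) = \sum_{j\geq 2}c_j\bv_j$ with $c_j \in \O$, using that $\{\bv_j\}$ is an $\O$-basis for $\pi(\Lambda)$. Then $\bw - \sum_{j\geq 2} c_j\bu_j$ lies in $\ker\pi\cap\Lambda = \mathfrak a_{\bu_1}(\Lambda)\bu_1$, which collapses to $\O\bu_1$ because $\bu_1$ is unimodular. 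Combined with (1) this yields a basis. Part (3) is a one-line unwinding of definitions: $\alpha\in \mathfrak a_{\bu_j}(\Lambda)$ means $\alpha\bu_j\in\Lambda$, so $\alpha\bv_j = \pi(\alpha\bu_j) \in \pi(\Lambda)$, i.e.\ $\alpha\in\mathfrak a_{\bv_j}(\pi(\Lambda))$.

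The technical heart is (4), and that is where I expect the only real care to be needed. The plan is to fix $\bw\in\Lambda$ and $\beta'\in\mathfrak b'$; by definition of $\mathfrak b'$ we have $\beta'\pi(\bw)\in\O\bv_2\oplus\cdots\oplus\O\bv_n$, so write $\beta'\pi(\bw) = \sum_{j\geq 2}c_j\bv_j$ with $c_j\in\O$. Then $\beta'\bw - \sum_{j\geq 2}c_j\bu_j$ lies in $\ker\pi\cap\Lambda = \mathfrak a_{\bu_1}(\Lambda)\bu_1 = \mathfrak b^{-1}\bu_1$, so it equals $\alpha\bu_1$ for some $\alpha\in\mathfrak b^{-1}$. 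Multiplying by an arbitrary $\beta\in\mathfrak b$ produces
$$\beta\beta'\,\bw \;=\; (\beta\alpha)\bu_1 \;+\; \sum_{j\geq 2}(\beta c_j)\bu_j,$$
with every coefficient in $\O$, and since products $\beta\beta'$ generate $\mathfrak b\mathfrak b'$ the claim follows. The subtle point to keep straight is the direction of the identification $\mathfrak a_{\bu_1}(\Lambda) = \mathfrak b^{-1}$: the factor $\mathfrak b$ coming from the coefficient ideal of $\bu_1$ is precisely what clears the denominator $\mathfrak b^{-1}$. Beyond this bookkeeping, the argument uses nothing deeper than the kernel identity and the definitions of coefficient ideal and annihilator.
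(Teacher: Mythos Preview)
Your proposal is correct and follows essentially the same approach as the paper. The only cosmetic difference is in part (4): the paper writes an arbitrary $\bx\in\Lambda$ as $\sum_i \alpha_i\bu_i$ with $\alpha_i\in K$ from the outset and then shows $b'\alpha_i\in\O$ for $i\geq 2$ and $b'\alpha_1\in\mathfrak b^{-1}$, whereas you phrase the same computation via the kernel identity $\ker\pi\cap\Lambda=\mathfrak b^{-1}\bu_1$; the two arguments are line-for-line equivalent once one matches $c_j\leftrightarrow b'\alpha_j$ and $\alpha\leftrightarrow b'\alpha_1$.
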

\begin{proof}
Part (1) is straightforward and part (2) is essentially the same as \cite[Lemma 3.1]{BCIL}.  For part (3), suppose that $\alpha \bu_j \subseteq \Lambda$.  Then $\alpha \bv_j = \alpha \pi(\bu_j) \in \pi(\Lambda)$ and the assertion follows immediately.

As for part (4), let $\bx$ be a vector in $\Lambda$.  Then $\bx = \alpha_1\bu_1 + \alpha_2\bu_2 + \cdots + \alpha_n\bu_n$, where $\alpha_1, \ldots, \alpha_n \in K$.  For any $b' \in \mathfrak b'$,
$$b'\pi(\bx)= b'\alpha_2\bv_2 + \cdots + b'\alpha_n\bv_n$$
which must be a vector in $\O\bv_2 \oplus \cdots \oplus \O\bv_n$.  Thus, $b'\alpha_i \in \O$ for all $i \geq 2$ and hence $b'\alpha_1\bu_1 \in \Lambda$.  Thus, $b'\alpha_1 \in \mathfrak b^{-1}$; so for any $b \in \mathfrak b \subseteq \O$, we have $bb' \alpha_1 \in \O$ and
$$bb'\bx = bb'\alpha_1\bu_1 + bb'\alpha_2\bu_2 + \cdots + bb'\alpha_n\bu_n$$
is a vector in $\O\bu_1 \oplus \cdots \oplus \O\bu_n$.  This proves part (3).
\end{proof}

Let $\Lambda$ be a positive definite $\O$-lattice of rank $n$.  The minimum of $\Lambda$ is defined as
$$\min(\Lambda): = \min\{\NormK(Q(\bx)) : \0 \neq \bx \in \Lambda \}.$$
If $\Lambda$ is free and $Q$ is the quadratic form associated to a particular basis for $\Lambda$, then $\min(\Lambda)$ is equal to $\min(Q)$ defined earlier.  A vector $\bv \in \Lambda$ is called a {\em minimal vector} if $\NormK(Q(\bv)) = \min(\Lambda)$.  The set of minimal vectors in $\Lambda$ is denoted by $\mathfrak M(\Lambda)$.  When $K$ has class number 1, every minimal vector of $\Lambda$ must be unimodular.  However, this may not be the case when the class number of $K$ is not 1.  An example can be found in \cite{BI}.

\begin{lem} \label{minimal_ideal}
Let $\bv$ be a minimal vector of a positive definite $\O$-lattice.  If $\mathfrak b^{-1}$ is the coefficient ideal of $\bv$ relative to $\Lambda$, then $\mathfrak b$ is an integral ideal of minimal norm in its class.
\end{lem}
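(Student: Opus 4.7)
The plan is to translate the minimality of $\bv$ in the lattice into a minimality statement about the absolute norm of $\mathfrak b$ among integral ideals in its class. The key dictionary is the definition $\mathfrak a_{\bv}(\Lambda) = \{\alpha \in K : \alpha\bv \in \Lambda\}$: since $\bv \in \Lambda$, we have $1 \in \mathfrak a_{\bv}(\Lambda) = \mathfrak b^{-1}$, so $\mathfrak b \subseteq \O$ is integral.

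Now let $\mathfrak c$ be any integral ideal in the same class as $\mathfrak b$. Write $\mathfrak c = \alpha \mathfrak b$ for some $\alpha \in K^\times$. The containment $\alpha \mathfrak b = \mathfrak c \subseteq \O$ is precisely the statement that $\alpha \in \mathfrak b^{-1}$, which by the definition of coefficient ideal means $\alpha \bv \in \Lambda$. Since $\alpha \neq 0$, the vector $\alpha \bv$ is a nonzero element of $\Lambda$.

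The minimality of $\bv$ now gives
\begin{equation*}
\min(\Lambda) \;\leq\; \NormK\bigl(Q(\alpha\bv)\bigr) \;=\; \NormK(\alpha)^2\, \NormK\bigl(Q(\bv)\bigr) \;=\; \NormK(\alpha)^2\, \min(\Lambda),
\end{equation*}
so $|\NormK(\alpha)| \geq 1$. Taking absolute norms of the ideal identity $\mathfrak c = \alpha \mathfrak b$ yields $\Norm(\mathfrak c) = |\NormK(\alpha)|\, \Norm(\mathfrak b) \geq \Norm(\mathfrak b)$, which is exactly the claim that $\mathfrak b$ has minimal norm in its ideal class.

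There is really no serious obstacle here: the argument is a one-line application of minimality together with the multiplicativity of the norm on principal fractional ideals. The only point that needs a bit of care is making sure that the scaling factor $\alpha$ relating $\mathfrak c$ to $\mathfrak b$ actually lies in $\mathfrak b^{-1}$ (so that $\alpha\bv$ lands inside $\Lambda$ and minimality applies), but this is immediate from the integrality of $\mathfrak c$.
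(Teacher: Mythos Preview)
Your argument is correct and is essentially the same as the paper's: both use that if $\mathfrak c = \alpha\mathfrak b$ is integral then $\alpha \in \mathfrak b^{-1}$, hence $\alpha\bv \in \Lambda$, and then invoke minimality of $\bv$ to bound $|\NormK(\alpha)|$. The paper phrases this as a proof by contradiction (assuming a strictly smaller $\mathfrak c$ exists) while you give the equivalent direct version, and you also make explicit the easy verification that $\mathfrak b$ is integral.
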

\begin{proof}
Suppose that $\mathfrak b' = \lambda \mathfrak b$ is an integral ideal in the same class of $\mathfrak b$ whose norm is strictly smaller than the norm of $\mathfrak b$.  Then $\NormK(\lambda) < 1$.  But $\lambda$ is in $\mathfrak b^{-1}$ and hence $\lambda \bv$ is in $\Lambda$.  This is a contradiction since $\NormK(Q(\lambda \bv)) < \NormK(Q(\bv)) = \min(\Lambda)$.
\end{proof}

 A set of linearly independent vectors $\{\bu_1, \ldots, \bu_n\}$ is called {\em HKZ-reduced} if the associated Gram matrix $(B(\bu_i, \bu_j))$ is the Gram matrix of a HKZ-reduced quadratic form.

\begin{thm} \label{hkz-reduction-lattice}
Let $\Lambda$ be a positive definite $\O$-lattice of rank $n$.  There exists a free sublattice $L$ of $\Lambda$ such that $\min(L) = \min(\Lambda)$ and
\begin{enumerate}
\item $L$ has a HKZ-reduced basis;

\item $[\Lambda : L] \leq \left(\frac{d!}{d^d}\sqrt{\vert d_K\vert}\right)^{\!n^2}$;

\item $\Lambda = L$ if the class number of $K$ is $1$.
\end{enumerate}
\end{thm}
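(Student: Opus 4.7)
I would argue by induction on $n$, strengthening the statement to include the claim that the annihilator of $\Lambda/L$ as an $\O$-module has norm at most $M_K^n$, where $M_K := \frac{d!}{d^d}\sqrt{|d_K|}$ is the Minkowski bound for integral-ideal representatives. The case $n=1$ is handled by taking $L = \O\bv_1$ for $\bv_1$ a minimal vector of $\Lambda$, adjusted by a unit via Lemma \ref{reduced_elements} so condition (2) of weakly reduced holds for $h_1 := Q(\bv_1)$; Lemma \ref{minimal_ideal} combined with Minkowski's bound then forces $\Norm(\mathfrak{b}_1) \leq M_K$, where $\mathfrak{b}_1 := \mathfrak{a}_{\bv_1}(\Lambda)^{-1}$, and one computes $[\Lambda:L] = \Norm(\mathfrak{b}_1)$ directly.

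For the inductive step, first pick a minimal vector $\bv_1 \in \Lambda$ and, by Lemma \ref{reduced_elements} applied to the totally positive element $h_1 = Q(\bv_1) \in \O$, replace $\bv_1$ by $\epsilon\bv_1$ for a suitable unit $\epsilon$. The adjusted $\bv_1$ is still a minimal vector, and condition (2) of weak reduction holds for $h_1$. Let $\pi : K\Lambda \to K\Lambda$ be the orthogonal projection onto $(K\bv_1)^\perp$; then $\pi(\Lambda)$ is a positive definite $\O$-lattice of rank $n-1$. The inductive hypothesis furnishes a free sublattice $L' \subseteq \pi(\Lambda)$ with a HKZ-reduced basis $\bv_2, \ldots, \bv_n$, with $\min(L') = \min(\pi(\Lambda))$, and with the annihilator $\mathfrak{b}'$ of $\pi(\Lambda)/L'$ satisfying $\Norm(\mathfrak{b}') \leq M_K^{n-1}$. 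Choose lifts $\bu_j \in \Lambda$ of $\bv_j$ for $2 \leq j \leq n$, then replace each $\bu_j$ by $\bu_j - a_j\bv_1$ for $a_j \in \O$ chosen so that the inner coefficient $u_{1j} = B(\bv_1,\bu_j)/h_1$ lands in $\mathcal{B}_K$ (possible by the defining property of $\mathcal{B}_K$). Set $L := \O\bv_1 \oplus \O\bu_2 \oplus \cdots \oplus \O\bu_n$; Lemma \ref{build_basis}(1) shows these vectors are linearly independent.

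I then check that $L$ is HKZ-reduced. Because $\pi(\bu_j)=\bv_j$ is unaffected by the shifts by multiples of $\bv_1$, the Lagrange coefficients $u_{ij}$ for $2 \leq i < j$ coincide with those of the reduced basis of $L'$ and hence lie in $\mathcal{B}_K$; the coefficients $u_{1j}$ lie in $\mathcal{B}_K$ by construction. Similarly, the outer coefficients $h_i$ for $i \geq 2$ are exactly those of $L'$ and thus satisfy both conditions of weak reduction within the range $2 \leq i < j \leq n$ by induction. Condition (2) for $h_1$ holds by the unit adjustment, and $\NormK(h_1) = \min(L)$ follows from $\min(\Lambda) \leq \min(L) \leq \NormK(Q(\bv_1)) = \min(\Lambda)$ since $L \subseteq \Lambda$ and $\bv_1 \in L$. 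The remaining inequalities $\NormK(h_1) \leq \od(j-1)\NormK(h_j)$ for $2 \leq j \leq n$ I would obtain by applying Hermite's inequality $\min(L_j) \leq \sigma_j \, d(L_j)^{1/j}$ to the rank-$j$ sublattice $L_j := \O\bv_1 \oplus \O\bu_2 \oplus \cdots \oplus \O\bu_j$, using $\min(L_j) \geq \NormK(h_1)$ and $d(L_j) = \NormK(h_1 h_2 \cdots h_j)$, and iterating together with the already-established bounds on $\NormK(h_i)/\NormK(h_j)$ for $2 \leq i \leq j$, exactly as in the proof of \cite[Lemma 3.5]{BCIL}, to reach $\NormK(h_1) \leq \alpha(j-1)\NormK(h_j) \leq \od(j-1)\NormK(h_j)$.

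For the index bound, Lemma \ref{build_basis}(4) gives $\mathfrak{b}_1 \mathfrak{b}' \subseteq \mathfrak{a}$, where $\mathfrak{a}$ is the annihilator of $\Lambda/L$, so $\Norm(\mathfrak{a}) \leq \Norm(\mathfrak{b}_1)\Norm(\mathfrak{b}') \leq M_K^n$, which is the strengthened inductive claim at rank $n$. Localizing at each prime $\mathfrak{p}$ and applying the Smith normal form to $L_\mathfrak{p} \subseteq \Lambda_\mathfrak{p}$ (both free $\O_\mathfrak{p}$-modules of rank $n$) yields $|\Lambda_\mathfrak{p}/L_\mathfrak{p}| \leq \Norm(\mathfrak{a}_\mathfrak{p})^n$; taking the product over all $\mathfrak{p}$ gives $[\Lambda:L] \leq \Norm(\mathfrak{a})^n \leq M_K^{n^2}$. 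For part (3), when $K$ has class number $1$ the minimal-norm integral ideal in the trivial class is $\O$, so $\mathfrak{b}_1 = \O$ and $\bv_1$ is unimodular; by induction $L' = \pi(\Lambda)$, and Lemma \ref{build_basis}(2) then gives $L = \Lambda$. The main obstacle is the $i=1$ case of condition (1) of weak reduction, which is not inherited from the projection and requires reproducing the Hermite-iteration argument of \cite[Lemma 3.5]{BCIL} in the number-field setting; a secondary bookkeeping difficulty is the passage from the annihilator bound to the $M_K^{n^2}$ index bound via local Smith normal form.
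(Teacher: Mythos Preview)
Your approach is essentially the same as the paper's: both pick a minimal vector, project, and repeat inductively, invoking Lemma~\ref{reduced_elements} for the unit adjustment, Lemma~\ref{build_basis} for the lift and the annihilator containment, Lemma~\ref{minimal_ideal} plus Minkowski's bound for the index estimate, and the Schnorr-style iteration for condition~(1) of weak reduction. The paper simply cites \cite[Lemma 3.4]{BCIL} and \cite[Lemma 2.4]{Schnorr} for that last step (your reference to \cite[Lemma 3.5]{BCIL} is a slip; that lemma bounds $\alpha(m)$ rather than runs the recursion), and its index argument is the same annihilator-to-index passage you spell out via local Smith normal form. One small point: your recursion for the $i=1$ inequality feeds in the inductive $\od$-bounds rather than $\alpha$-bounds, which does not obviously collapse to $\alpha(j-1)$; the clean fix is to observe that your construction already gives $\NormK(h_i)=\min(\pi_i(L))$ for every $i$, so the Schnorr recursion applies uniformly and yields $\NormK(h_i)\le\alpha(j-i)\NormK(h_j)$ for all $i<j$ at once, exactly as the paper does.
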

\begin{proof}
Let $\bv_1$ be a minimal vector of $\Lambda$. Using Lemma \ref{build_basis} and an induction argument, we obtain an orthogonal basis $\bv_1, \ldots, \bv_n$ of $K\Lambda$ and linearly independent vectors $\bu_1, \ldots, \bu_n$ in $\Lambda$ such that $\bu_1 = \bv_1$, $\bv_j \in \mathfrak M(\pi_j(\Lambda))$ for $2 \leq j \leq n$, where $\pi_j$ is the orthogonal projection of $K\Lambda$ onto the orthogonal complement of $\text{span}(\bv_1, \ldots, \bv_{j-1})$, and $\pi_j(\bu_j) = \bv_j$.
For $1 \leq j \leq n$, let $h_j = Q(\bv_j)$.  By Lemma \ref{reduced_elements}, we may assume that
\begin{equation*} \label{hkz2}
h_j^\pnu \leq \lambda h_j^\pmu \quad \mbox{ for all } \nu, \mu \mid \infty.
\end{equation*}

For $2\leq j \leq n$,
$$\bu_j = \bv_j + \sum_{\ell = 1}^{j-1} u_{\ell j}\bv_\ell, \quad u_{\ell j} \in K,$$
which can be rewritten as
$$\bu_j = \bv_j + u_{j-1, j} \bu_{j-1} + \sum_{\ell = 1}^{j-2}(u_{\ell j} - u_{j-1, j})\bv_\ell.$$
For any $i < j$, let $\alpha_{ij} \in \O$ such that $\alpha_{ij} - u_{ij} \in \mathcal B_K$.   Since $\pi_j(\bu_j) = \pi_j(\bu_j - \alpha_{j-1, j}\bu_{j-1})$, we may change $\bu_j$ to $\bu_j - \alpha_{j-1, j}\bu_{j-1}$ and assume that $u_{j-1, j}$ is already in $\mathcal B_K$ at the outset.  We may continue this process and at the end we may very well assume that all the $u_{ij}$ are in $\mathcal B_K$.

Let $L$ be the free $\O$-lattice $\O\bu_1 \oplus \cdots \oplus \O\bu_n$.  By Lemma \ref{build_basis}, $L = \Lambda$ when the class number of $K$ is 1.  It is clear that $\min(L) = \min(\Lambda)$, and $\bv_j \in \mathfrak M(\pi_j(L))$.  By Lemmas \ref{build_basis} and \ref{minimal_ideal} , the annihilator of $\Lambda$ modulo $L$ is of the form $\mathfrak b_1\cdots \mathfrak b_n$, where each $\mathfrak b_i$ is an integral ideal containing the ideal of the smallest norm in its class.  Therefore, by Minkowski's bound \cite[Page 166]{FT},
$$[\Lambda : L] \leq \NormK(\mathfrak b_1\cdots \mathfrak b_n)^n \leq \left(\frac{d!}{d^d}\sqrt{\vert d_K \vert}\right)^{\!n^2}.$$

Every vector $\bv$ in $L$ is of the form $\bv = \sum_{i=1}^n x_i \bu_i$, where $x_i \in \mathcal O$ for all $i$, which can be re-written as
\begin{equation*} \label{formofv}
\bv = \sum_{i=1}^n \left(x_i + \sum_{j = i+1}^n u_{ij}x_j\right)\bv_i.
\end{equation*}
Since $\bv_1, \ldots, \bv_n$ is an orthogonal basis for $KL$, we have
\begin{equation} \label{newform}
Q(\bv) = \sum_{i=1}^n h_i \,\left(x_i + \sum_{j = i + 1}^n u_{ij}x_j\right)^{\!\!2}
\end{equation}
which is a positive definite quadratic form over $K$ with $h_1, \ldots, h_n$ as its outer coefficients and $u_{ij}$ as its inner coefficients. Following the proofs of \cite[Lemma 3.4]{BCIL} and \cite[Lemma 2.4]{Schnorr}, one may deduce that
\begin{equation*} \label{hkz1}
\NormK(h_i) \leq \od(j - i)\NormK(h_j), \quad \mbox{ for all } 1 \leq i < j \leq n.
\end{equation*}

In sum, the quadratic form \eqref{newform} is HKZ-reduced and hence the basis $\bu_1, \ldots, \bu_n$ is a HKZ-reduced basis for $L$.
\end{proof}

An immediate corollary of this theorem is:

\begin{cor} \label{hkz-class-number-1}
Let $K$ be a totally real number field of class number $1$.  Then every positive definite quadratic form over $K$ is integrally equivalent to a HKZ-reduced form.
\end{cor}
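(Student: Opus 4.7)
The plan is to apply Theorem \ref{hkz-reduction-lattice} to the $\O$-lattice corresponding to the given quadratic form and then use the geometric-arithmetic dictionary between bases of free $\O$-lattices and integral equivalence classes of quadratic forms described in Section 3.2.

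First, I would let $Q$ be a positive definite quadratic form in $n$ variables over $K$. Pick any basis $\be_1, \ldots, \be_n$ of a free $\O$-lattice $\Lambda$ whose associated Gram matrix is $Q$; such a $\Lambda$ exists because $Q$ is positive definite (e.g.\ take $\Lambda = \O^n$ with the bilinear form dictated by $Q$). Then apply Theorem \ref{hkz-reduction-lattice} to $\Lambda$ to obtain a free sublattice $L \subseteq \Lambda$ such that $\min(L) = \min(\Lambda)$ and $L$ admits a HKZ-reduced basis $\bu_1, \ldots, \bu_n$.

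Next, I would invoke part (3) of Theorem \ref{hkz-reduction-lattice}: since $K$ has class number $1$, we actually have $L = \Lambda$. Thus $\bu_1, \ldots, \bu_n$ and $\be_1, \ldots, \be_n$ are two bases of the \emph{same} free $\O$-lattice $\Lambda$, so the change-of-basis matrix $T$ between them lies in $\GL_n(\O)$. The Gram matrix $Q'$ of $(B(\bu_i, \bu_j))$ is HKZ-reduced by construction, and by the correspondence recalled at the start of Section 3.2, $Q' = Q[T] = T^t Q T$ is integrally equivalent to $Q$.

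There is essentially no obstacle here: all the real work was done in proving Theorem \ref{hkz-reduction-lattice}, and the corollary is the extraction of the class-number-$1$ case, where the combinatorial loss measured by the index $[\Lambda : L]$ vanishes. The only thing to be careful about is phrasing the translation from the language of lattices and bases back to the language of quadratic forms and integral equivalence, which is precisely what the one-to-one correspondence between isometry classes of free $\O$-lattices and integral equivalence classes of quadratic forms (mentioned in Section 3.2) provides.
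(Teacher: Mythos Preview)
Your argument is correct and is exactly the intended one: the paper states this corollary as an immediate consequence of Theorem~\ref{hkz-reduction-lattice} without further proof, and what you have written simply spells out that immediacy via part~(3) and the lattice--form dictionary of Section~3.2.
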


When the class number of $K$ is bigger than 1, Theorem \ref{hkz-reduction-lattice} shows that every positive definite quadratic form $Q$ in $n$ variables over $K$ represents a HKZ-reduced form $\tilde{Q}$, i.e. there exists $X \in M_n(\O)$ such that $\tilde{Q} = X^t Q X$ and $\vert \NormK(\det X) \vert$ is bounded above by a constant depending only on $K$ and $n$.  By \cite[Page 279, Th\'{e}or\'{e}me 2]{Humbert}, there exists a finite set of matrices $\mathfrak T(K, n)$, which depends only on $K$ and $n$, with the property that for every such $X$ there are $U \in \text{GL}_n(\O)$ and $T \in \mathfrak T(K, n)$ such that $X = UT$.  Let $\tilde{\mathcal P}_n$ be the set of HKZ-reduced quadratic forms in $n$ variables. Then every positive definite quadratic form in $n$ variables over $K$ must be integrally equivalent to a quadratic form in the set
$$\mathcal Q_n: = \bigcup_{T \in \mathfrak T(K, n)} T^{-t}\, \tilde{\mathcal P}_n\, T^{-1}.$$
It is likely that different forms in $\mathcal Q_n$ are equivalent even if they are not on the boundary of $\mathcal Q_n$.  It would be interesting to determine a subset of $\mathcal Q_n$ which is a fundamental domain of the action of $\text{GL}_n(\O)$ on the set of positive definite quadratic forms in $n$ variables over $K$.

\subsection{Balanced HKZ-reduced forms}

For our purpose of obtaining the upper bound for $g_{\O}(n)$ in Theorem \ref{maintheorem}, we need to control the growth of the inner coefficients of both the quadratic form and its dual.  This leads to the definition of balanced HKZ-reduced forms which will be given in this subsection.

Let $\U$ be the group scheme of $n\times n$ upper triangular unipotent matrices.  For $1 \leq k \leq n$, let $\T_k$ be the additive group of $n\times n$ matrices spanned by $E_{1,1+k}, \ldots, E_{n-k, n}$, where $E_{ij}$ denotes the $n\times n$ matrix with 1 in the $(i,j)$-entry and 0 elsewhere.  For any subvariety $\bm V$ of $\text{GL}_n$ and any subset $X$ of a $\Z$-algebra, $\bm V(X)$ denotes the set of points in $\bm V$ with entries from $E$.

For any nonnegative integer $m$, let $c(m)$ be the coefficient of $x^m$ in the Maclaurin series of $\exp(\frac{\beta x}{1 - x})$.  Let
$$\mathcal E(m): = \{(\gamma_\nu) \in K_\infty : \vert \gamma_\nu \vert_\nu \leq c(m) \mbox{ for all } \nu \mid \infty\}.$$

\begin{lem}\textnormal{\cite[Lemma 4.4]{BCIL}} \label{cm}
There exists a constant $D_4$, depending only on $K$, such that
$$c(m)\leq \oc(m): =  D_4\,e^{2\sqrt{\beta m}}$$
for any $m \geq 1$.
\end{lem}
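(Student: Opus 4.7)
The plan is to bound the $m$-th Taylor coefficient of $f(x) = \exp\!\left(\frac{\beta x}{1-x}\right)$ by the standard real-variable trick available for power series with nonnegative coefficients. I would first observe that for $|x|<1$,
$$\frac{\beta x}{1-x} = \beta\sum_{k \geq 1} x^k$$
has nonnegative coefficients, and the exponential of such a series again has nonnegative coefficients; hence every $c(m) \geq 0$. Consequently, for any $r \in (0,1)$,
$$c(m)\, r^m \leq \sum_{k \geq 0} c(k)\,r^k = \exp\!\left(\frac{\beta r}{1-r}\right),$$
and therefore
$$c(m) \leq \exp\!\left(\frac{\beta r}{1-r} - m\ln r\right).$$

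Next I would optimize (or near-optimize) the right-hand side. Substituting $r = 1-s$ with $s \in (0,1)$, the exponent becomes $\beta/s - \beta - m\ln(1-s)$, whose dominant part $\beta/s + ms$ is minimized at $s^* = \sqrt{\beta/m}$ with value $2\sqrt{\beta m}$. For $m \geq 2\beta$, I would therefore choose $s = \sqrt{\beta/m}$, so that $\beta/s = \sqrt{\beta m}$, and expand
$$-m\ln(1-s) = m\sum_{k \geq 1} \frac{s^k}{k} = \sqrt{\beta m} + \frac{\beta}{2} + m \sum_{k \geq 3} \frac{s^k}{k}.$$
The tail is dominated by $m s^3/(1-s) = (\beta/m)^{1/2}\beta/(1-\sqrt{\beta/m})$, which is bounded by a constant depending only on $\beta$ (and hence only on $K$). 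Adding the two contributions gives
$$\frac{\beta r}{1-r} - m\ln r \leq 2\sqrt{\beta m} + C$$
for some constant $C = C(K)$.

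Finally, the finitely many values $1 \leq m < 2\beta$ contribute bounded factors which I would absorb into the constant $D_4$. Taking $D_4 := e^C$, then enlarging if necessary to cover the small cases, yields $c(m) \leq D_4\, e^{2\sqrt{\beta m}}$ for all $m \geq 1$. The main technical point is the careful bookkeeping of the expansion of $-m\ln(1-\sqrt{\beta/m})$: the $k=1$ term must produce exactly the second copy of $\sqrt{\beta m}$, the $k=2$ term contributes the benign constant $\beta/2$, and the tail $m\sum_{k\geq 3} s^k/k$ must be shown to be uniformly bounded in $m$. These are elementary estimates, but they have to be combined with enough precision to keep the leading exponent at $2\sqrt{\beta m}$ rather than a larger multiple of $\sqrt{m}$.
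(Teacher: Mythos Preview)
Your argument is correct: the nonnegativity of the coefficients lets you bound $c(m)$ by $r^{-m}f(r)$ for any $r\in(0,1)$, and the choice $r=1-\sqrt{\beta/m}$ makes the exponent $2\sqrt{\beta m}+O_K(1)$, with the finitely many small $m$ absorbed into $D_4$. The bookkeeping you outline (the $k=1$ term giving the second $\sqrt{\beta m}$, the $k=2$ term giving $\beta/2$, and the tail $m\sum_{k\ge 3}s^k/k\le ms^3/(1-s)$ bounded for $m\ge 2\beta$) is accurate.

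As for comparison: the present paper does not actually prove this lemma. It is stated with an explicit citation to \cite[Lemma~4.4]{BCIL} and no proof is given here; the authors simply import the bound. Your proposal therefore supplies a self-contained proof where the paper has none. The method you use---evaluating a power series with nonnegative coefficients at a well-chosen radius, which is the real-variable form of the Cauchy coefficient estimate---is the standard and expected route for this kind of bound, and is in all likelihood what lies behind the cited result as well.
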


Note that $\oc(m)$ is an increasing function of $m$.  We will choose $D_4 \geq 1$ so that $\oc(m) \geq 1$ for all $m \geq 0$.

\begin{lem}\label{xy}
For any $X\in \U(K_\infty)$, there exists  $Y\in \U(\O)$ such that:
\begin{enumerate}
\item[(1)] $XY$ can be written as $XY=\exp(Z_1)\cdots \exp(Z_{n-1})$ where $Z_k\in \T_k({\mathcal B}_K)$ for $1 \leq k \leq n-1$,

\item[(2)] the $(i,j)$-entries of both $XY$ and $(XY)^{-1}$ are in $\mathcal E(j - i)$ for $1 \leq i < j \leq n$.
\end{enumerate}
\end{lem}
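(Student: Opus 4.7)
The plan is to build $Y$ by iteratively correcting one superdiagonal of $X$ at a time. Write $\T_{\geq k}$ for $\T_k + \T_{k+1} + \cdots + \T_{n-1}$, so that the filtration $\T_a \cdot \T_b \subseteq \T_{a+b}$ holds (with $\T_m = 0$ for $m \geq n$). Setting $X_0 := X$, I will inductively produce $Y_k = I + W_k$ with $W_k \in \T_k(\O)$, matrices $Z_k \in \T_k(\mathcal B_K)$, and $X_k \in \U(K_\infty)$ satisfying
$$X_{k-1} Y_k \,=\, \exp(Z_k)\, X_k \qquad \text{and} \qquad X_k \in I + \T_{\geq k+1}(K_\infty).$$
Since $\T_m = 0$ for $m \geq n$, this forces $X_{n-1} = I$, and then $Y := Y_1 Y_2 \cdots Y_{n-1} \in \U(\O)$ yields
$$XY \,=\, \exp(Z_1)\exp(Z_2)\cdots\exp(Z_{n-1}),$$
proving~(1).

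For the inductive step, let $A_{k-1} \in \T_k(K_\infty)$ denote the $k$-th superdiagonal of $X_{k-1}$. The defining property of $\mathcal B_K$, applied to each entry of $A_{k-1}$, produces $W_k \in \T_k(\O)$ such that $Z_k := A_{k-1} + W_k \in \T_k(\mathcal B_K)$. A direct expansion using the filtration shows that the $k$-th superdiagonal of $X_{k-1}(I + W_k)$ equals $Z_k$ while its lower superdiagonals remain zero, so $X_{k-1} Y_k = I + Z_k + R$ with $R \in \T_{\geq k+1}$. A power-series computation gives $\exp(-Z_k)(I + Z_k) \in I + \T_{\geq 2k}$ (the leading correction being $-Z_k^2/2 \in \T_{2k}$), whence $X_k := \exp(-Z_k) X_{k-1} Y_k \in I + \T_{\geq k+1}$, since $2k \geq k+1$.

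For part~(2), fix an infinite place $\nu$. Each entry of $Z_k$ has $\nu$-absolute value at most $\beta$, and as $Z_k \in \T_k$ the matrix $Z_k^m$ is supported on the $mk$-th superdiagonal with entries of $\nu$-absolute value at most $\beta^m$. Hence $(\exp(Z_k))_{pq}$ vanishes unless $k \mid q-p$, in which case it is bounded by $\beta^{(q-p)/k}/((q-p)/k)!$. Expanding $XY = \exp(Z_1)\cdots\exp(Z_{n-1})$ entry-wise, its $(i,j)$-entry is bounded by
$$\sum_{\substack{m_1,\ldots,m_{n-1}\,\geq\,0 \\ \sum_k k m_k \,=\, j - i}} \prod_{k=1}^{n-1} \frac{\beta^{m_k}}{m_k!},$$
which is the coefficient of $x^{j-i}$ in $\prod_{k=1}^{n-1}\exp(\beta x^k) = \exp\bigl(\beta(x + x^2 + \cdots + x^{n-1})\bigr)$. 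Because $j - i \leq n - 1$, this equals the $(j-i)$-th coefficient of $\exp(\beta x/(1-x))$, which is $c(j-i)$ by definition. The inverse $(XY)^{-1} = \exp(-Z_{n-1})\cdots\exp(-Z_1)$ has an identical structure---the signs on $Z_k$ and the reversed order do not affect the absolute-value count---so the same bound holds for its $(i,j)$-entries.

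The main subtlety is the bookkeeping in the inductive step: one must verify that $Y_k$ and $\exp(-Z_k)$ each affect only superdiagonals of index $\geq k$, and that together they clean up precisely the $k$-th superdiagonal of $X_{k-1}$ to land in $I + \T_{\geq k+1}$. Both reductions follow from the filtration identity $\T_a \cdot \T_b \subseteq \T_{a+b}$. The essential content of~(2) is the generating-function identity $\prod_{k \geq 1}\exp(\beta x^k) = \exp(\beta x/(1-x))$, which converts the combinatorial sum into the scalar coefficients $c(m)$ of Lemma~\ref{cm}.
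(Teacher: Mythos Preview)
Your proof is correct and follows essentially the same approach as the paper, which defers to \cite[Lemmas 4.1--4.3]{BCIL}: the inductive peeling-off of one superdiagonal at a time via the filtration $\T_a\cdot\T_b\subseteq\T_{a+b}$ is exactly Lemma~4.1 there, and your generating-function bound via $\prod_k\exp(\beta x^k)=\exp(\beta x/(1-x))$ is the content of Lemmas~4.2--4.3, with the place-by-place treatment at each $\nu\mid\infty$ supplying precisely the ``slight modification'' the paper mentions.
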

\begin{proof}
The proof of \cite[Lemma 4.1]{BCIL} carries over verbatim and establishes (1).

For (2), the proof follows that of \cite[Lemma 4.3]{BCIL} but needs a slight modification.   For two matrices $A$ and $B$ in $M_n(K_\infty)$, we write ``$A \preceq B$" if $\vert a_{ij}^\nu \vert \leq \vert b_{ij}^\nu \vert$ for all $1 \leq i, j \leq n$ and $\nu \mid \infty$, where $a_{ij}^\nu$ and $b_{ij}^\nu$ are the $(i,j)$-entry of $A^\nu$ and $B^\nu$ respectively.  Then \cite[Lemma 4.2]{BCIL} holds in the present setting and we may carry the proof of \cite[Lemma 4.3]{BCIL} over verbatim.
\end{proof}

At last, here is the definition of balanced HKZ-reduced forms.

\begin{defn} \label{balancedhkzdefn}
Let $K$ be a totally real number field.  A positive definite quadratic form $Q$ in $n$ variables over $K$ is called {\em balanced HKZ-reduced} if it is weakly reduced and  for $1 \leq i < j \leq n$ the $(i,j)$-th inner coefficients of both $Q$ and $Q^{-1}$ are lying inside $\mathcal E(j-i)$.
\end{defn}

\begin{prop} \label{balancedhkz}
If $K$ has class number 1, then every positive definite quadratic form over $K$ is integrally equivalent to a balanced HKZ-reduced quadratic form.
\end{prop}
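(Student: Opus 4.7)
The plan is to combine the HKZ reduction already available in the class-number-one case with the balancing Lemma~\ref{xy}. By Corollary~\ref{hkz-class-number-1} we may assume that $Q$ is itself HKZ-reduced; write its Lagrange expansion $Q = U^t H U$ with $U \in \U(K)$ upper triangular unipotent and $H = \diag(h_1, \dots, h_n)$ collecting the outer coefficients. The weakly reduced conditions of Definition~\ref{weakly-reduced-definition} depend only on $H$, so any change of basis by an element of $\U(\O)$---which replaces $U$ by $UY$ while leaving $H$ unchanged---will automatically preserve them.

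Next, apply Lemma~\ref{xy} with $X := U \in \U(K) \subseteq \U(K_\infty)$ to produce $Y \in \U(\O)$ such that the above-diagonal $(i,j)$-entries of both $UY$ and $(UY)^{-1}$ lie in $\mathcal{E}(j-i)$ for all $1 \leq i < j \leq n$. Set $Q' := Y^t Q Y = (UY)^t H (UY)$. Since $Y \in \GL_n(\O)$, $Q'$ is integrally equivalent to $Q$, and because $Y$ is upper triangular unipotent, the Lagrange expansion of $Q'$ has upper triangular factor $UY$ and diagonal $H$. Hence $Q'$ has the same outer coefficients as $Q$ and remains weakly reduced, while its $(i,j)$-th inner coefficient---the $(i,j)$-entry of $UY$---lies in $\mathcal{E}(j-i)$. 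In parallel, $(Q')^{-1} = (UY)^{-1} H^{-1} (UY)^{-t}$, and under the convention of Definition~\ref{balancedhkzdefn} (the one for which Lemma~\ref{xy} is tailor-made) the $(i,j)$-th inner coefficient of $(Q')^{-1}$ is the $(i,j)$-entry of the upper triangular unipotent matrix $(UY)^{-1}$, which by Lemma~\ref{xy}(2) also lies in $\mathcal{E}(j-i)$. Thus $Q'$ fulfils every condition in Definition~\ref{balancedhkzdefn}.

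The conceptual crux is arranging that a single integral $Y$ simultaneously balances $UY$ and $(UY)^{-1}$, which is precisely what Lemma~\ref{xy}(2) provides. Once that lemma is in hand, the remainder is a direct verification; the only step worth flagging is recognising that the ``inner coefficients of $Q^{-1}$'' in Definition~\ref{balancedhkzdefn} are to be read as the above-diagonal entries of $(UY)^{-1}$, so that the bound in Lemma~\ref{xy}(2) immediately yields the desired conclusion without any further computation.
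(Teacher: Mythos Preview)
Your argument is correct and is exactly the intended fleshing-out of the paper's one-line proof (``This is clear''): pass to an HKZ-reduced form via Corollary~\ref{hkz-class-number-1}, then apply Lemma~\ref{xy} to the unipotent factor $U$ and replace $Q$ by $Y^tQY$. Your reading of ``inner coefficients of $Q^{-1}$'' as the above-diagonal entries of $U^{-1}$ (rather than those of the Lagrange expansion of $Q^{-1}$) is indeed the intended convention, as is confirmed by how the balanced condition is invoked in Step~1 of the proof of Proposition~\ref{backbone}.
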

\begin{proof}
This is clear.
\end{proof}

\section{Technical lemmas}

The following two lemmas address the representation of quadratic forms by $I_5$.  They are consequences of  \cite[Theorem 3]{HKK} and
Kneser's theory of neighbors of quadratic lattices \cite{Kn}, respectively.

\begin{lem} \label{rep15}
There exists a positive rational integer $r$, depending only on $K$, such that if $a$ is a totally positive element in $\O$ and $\TraceK(a) \geq r$ then $a$ is represented by $I_5$.
\end{lem}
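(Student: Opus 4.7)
The approach is to apply \cite[Theorem 3]{HKK}, a quantitative local-to-global representation theorem of Hsia--Kitaoka--Kneser type. That theorem, applied to the five-dimensional form $I_5$ over $\O$ and the rank-one $\O$-lattice $\langle a \rangle$ corresponding to $a$, produces an effective constant $r_0$ depending only on $K$ such that whenever $\TraceK(a) \geq r_0$ and $\langle a \rangle$ is locally represented by $I_5$ at every completion of $K$, $a$ is represented by $I_5$ globally over $\O$. We take $r$ to be any positive integer with $r \geq r_0$.

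Hence everything reduces to verifying that $a$ is locally represented by $I_5$ at every place of $K$, which holds unconditionally on $a$. At an infinite place $\nu$, the total positivity of $a$ gives $a^{(\nu)} > 0$, and $I_5 \otimes_K K_\nu$ is a five-dimensional positive definite real quadratic form that represents every positive real number. At a finite place $\mathfrak p$, the local theory of representations of quadratic forms \cite[Theorem 63:21]{OM} yields $g_{K_{\mathfrak p}}(1) \leq 4$, so $a$ can be written as a sum of at most four, hence five, squares in $\O_{\mathfrak p}$. Thus the local hypothesis of \cite[Theorem 3]{HKK} is automatic for every totally positive $a$.

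Combining local representability with $\TraceK(a) \geq r$, the cited theorem yields a decomposition $a = \alpha_1^2 + \cdots + \alpha_5^2$ with $\alpha_i \in \O$, which is precisely the conclusion that $a$ is represented by $I_5$. The real work lies in the input \cite[Theorem 3]{HKK} itself, whose proof rests on the Siegel--Weil mass formula to evaluate representation numbers of $a$ by the genus of $I_5$ (which grow with $\TraceK(a)$) and then bounds the discrepancy between genus and class representation counts; once $\TraceK(a)$ exceeds the effective threshold $r_0$, the class of $I_5$ is forced to represent $a$. On our end there is no further obstacle, as the local verifications are all immediate consequences of the standard local theory.
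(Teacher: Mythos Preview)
Your approach---invoking \cite[Theorem 3]{HKK} and verifying the local representation hypotheses---is exactly the paper's, which simply attributes the lemma to that theorem without further argument. One small caveat: the citation \cite[Theorem~63:21]{OM} concerns representations over the local \emph{field} $K_{\mathfrak p}$, whereas what you actually need is integral local representability of $a$ by $I_5$ over $\O_{\mathfrak p}$; this does hold, but it comes from the local integral representation theory (cf.\ \cite{OM2}, \cite{Riehm}) rather than from the field-level statement $g_{K_{\mathfrak p}}(1)\le 4$.
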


\begin{lem} \label{rep25}
Let $\p$ be a prime ideal of $K$.  There exists a positive integer $\ell$, depending only on $\p$, such that a positive semidefinite integral quadratic form of rank $2$ over $K$ is represented by $I_5$ whenever its scale is divisible by $\p^\ell$.
\end{lem}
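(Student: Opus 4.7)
The plan is to combine local representation theory at all places of $K$ with Kneser's neighbor method. The strategy has three main stages, the last of which will be the main obstacle.

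In the first stage, I would verify that $B$ is represented by $I_5$ locally at every place of $K$. At the archimedean places this is immediate, since $B$ is positive semidefinite of rank $2$ and $I_5$ is positive definite of rank $5$. At a finite prime $\mathfrak{q} \ne \p$, the lattice $I_5$ over $\O_\mathfrak{q}$ is a unimodular lattice of rank $5$, and classical local representation theorems for unimodular lattices (see for example \cite[Chapter IX]{OM}) imply that any integral positive semidefinite binary form over $\O_\mathfrak{q}$ is represented by $I_5$. At $\p$, taking $\ell$ to be even (enlarging by $1$ if necessary) and letting $\pi$ be a uniformizer of $\p\O_\p$, the scale condition gives a factorization $B \otimes \O_\p = \pi^\ell B'$ with $B'$ integral of rank $2$ over $\O_\p$; scaling any $\O_\p$-representation of $B'$ into $I_5$ by $\pi^{\ell/2}$ then yields a representation of $B$ by $I_5$ locally at $\p$.

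In the second stage, the Hasse principle for representation by a genus (\cite[\S 102]{OM}) yields a lattice $L$ in the genus of $I_5$ over $\O$ that represents $B$ globally. To pass from $L$ to $I_5$, I would invoke Kneser's neighbor theorem: since $I_5$ is $\p$-unimodular and its reduction mod $\p$ is a quadratic space of rank $5 \ge 3$, hence isotropic over the residue field, every class in the spinor genus of $I_5$ is reachable from $I_5$ by a finite chain of $\p$-neighbors. Because the genus of $I_5$ over $\O$ has finite class number (bounded in terms of $K$ alone), the required chain length is bounded by some integer $N = N(\p)$ depending only on $\p$.

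The third and most technical stage is the transfer step: given a chain $L = L_N, L_{N-1}, \ldots, L_0 = I_5$ of $\p$-neighbors and a representation of $B$ by $L_{i+1}$, I would lift it to a representation of $B$ by $L_i$. The inclusion $\p L_{i+1} \subseteq L_i \cap L_{i+1}$, together with the fact that the representing pair of vectors has Gram matrix $B$ with entries in $\p^\ell$, permits a Hensel-type perturbation of the pair within $L_i \cap L_{i+1}$ to place it in $L_i$, with the error in $B$ introduced by the perturbation lying in a strictly higher power of $\p$ and hence absorbed by the scale, provided $\ell$ is large enough relative to $N(\p)$. Iterating along the chain and choosing $\ell$ to be a sufficiently large multiple of $N(\p)$ yields the desired representation of $B$ by $I_5$. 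This last stage is the main obstacle, as it requires a careful analysis of how the deep $\p$-scale of $B$ interacts with the isotropic-line structure of Kneser's neighbor operation at each step, keeping track of how much scale is consumed per transfer to ensure the error terms do not accumulate beyond what $\p^\ell$ absorbs.
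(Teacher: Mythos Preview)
The paper gives no argument for this lemma beyond attributing it to Kneser's neighbor method \cite{Kn}, and your three-stage outline (local representability, representation by the genus, transfer through a $\p$-neighbor chain) is exactly how one unpacks that citation; Stages~1 and~2 are on the right track. There is a real gap, however, in Stage~3. The hypothesis $\mathfrak{s}(B)\subseteq\p^\ell$ constrains only the \emph{values} $B(v_i,v_j)$, not the position of the representing vectors inside $L_{i+1}$: a primitive $v\in L_{i+1}$ may perfectly well have $Q(v)\in\p^\ell$ for $\ell$ as large as you like. Hence there is no reason for $(v_1,v_2)$ to lie in $\p L_{i+1}\subseteq L_i\cap L_{i+1}$, and a ``Hensel-type perturbation'' cannot repair this: perturbing by elements of $\p L_{i+1}$ leaves the cosets of the $v_k$ in $L_{i+1}/\p L_{i+1}$ unchanged, while any perturbation that genuinely moves the vectors into $L_i$ alters the Gram matrix to some $B'\neq B$. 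We need an \emph{exact} representation of $B$, so ``error absorbed by the scale'' has no content here.

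The fix is to reuse your own scaling trick from Stage~1, but at the level of Stage~2, \emph{before} invoking the neighbor chain. Let $N$ bound the diameter of the $\p$-neighbor graph on the classes in $\mathrm{gen}(I_5)$ and take $\ell\ge 2N$. Then $\pi^{-2N}B$ is $\O_\p$-integral and represented by $(I_5)_\p$, so $B$ is represented locally everywhere by the lattice $J$ with $J_\p=\p^N(I_5)_\p$ and $J_{\mathfrak q}=(I_5)_{\mathfrak q}$ for $\mathfrak q\ne\p$. Genus theory produces some $J'\in\mathrm{gen}(J)$ representing $B$; setting $L_\p=\p^{-N}J'_\p$ and $L_{\mathfrak q}=J'_{\mathfrak q}$ for $\mathfrak q\ne\p$ gives an $L\in\mathrm{gen}(I_5)$ in which the image $M$ of $B$ satisfies $M_\p\subseteq\p^N L_\p$. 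Now the transfer along any $\p$-neighbor chain $L=L_0,\dots,L_k\cong I_5$ of length $k\le N$ is automatic: since $\p(L_j)_\p\subseteq(L_{j+1})_\p$ and $L_j=L_{j+1}$ away from $\p$, induction gives $M\subseteq\p^{N-j}L_j$ for every $j$, whence $M\subseteq L_k\cong I_5$. No perturbation is needed at all. (A secondary point you should also address: Kneser's theorem, as usually stated, connects classes within a \emph{spinor genus}; you must either verify that $\mathrm{gen}(I_5)$ consists of a single spinor genus or allow neighbors at more than one prime.)
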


From now on, we fix a rational prime $p$ which is unramified in $K$ and let $\ell$ be a positive integer from Lemma \ref{rep25} for all prime ideal divisors of $p\O$.

\begin{lem} \textnormal{\cite[Lemma 6.1(a)]{BCIL}} \label{AplusS1}
Let $n \geq 2$ be a positive integer, $A = \diag(a_1, \ldots, a_n)$ be a diagonal matrix in $M_n(\R)$, and $S = (s_{ij})$ be a symmetric matrix in $M_n(\R)$.  If, for each $1 \leq i \leq n$, $a_i = \sum_{j=1}^n t_{ij}$ with $t_{ij} > 0$ and $t_{ij}t_{ji} > s_{ij}^2$ for all $j$, then $A + S$ is positive definite.
\end{lem}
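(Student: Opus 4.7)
The plan is to prove positivity directly by expanding the quadratic form $x^{t}(A+S)x$ and grouping terms so that the hypothesis $t_{ij}t_{ji} > s_{ij}^{2}$ controls each off-diagonal contribution via a $2 \times 2$ positive definite auxiliary form.

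First, I would use the hypothesis $a_i = \sum_{j=1}^n t_{ij}$ to rewrite
\begin{equation*}
x^{t}(A+S)x \; = \; \sum_{i=1}^{n}\bigl(t_{ii}+s_{ii}\bigr)x_i^{2} \; + \; \sum_{1 \leq i < j \leq n}\bigl(t_{ij}x_i^{2} + 2 s_{ij}x_i x_j + t_{ji}x_j^{2}\bigr),
\end{equation*}
splitting the diagonal $t_{ii}$ off from the cross terms $t_{ij}$, $i \neq j$, and pairing each $t_{ij}x_i^{2} + t_{ji}x_j^{2}$ with the off-diagonal entry $2 s_{ij}x_i x_j$ of $S$.

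Next, I would observe that each diagonal coefficient $t_{ii}+s_{ii}$ is strictly positive: the assumption $t_{ii}t_{ii} > s_{ii}^{2}$ together with $t_{ii} > 0$ forces $t_{ii} > |s_{ii}|$. For the pair terms with $i < j$, the $2 \times 2$ symmetric matrix
\begin{equation*}
\begin{pmatrix} t_{ij} & s_{ij} \\ s_{ij} & t_{ji} \end{pmatrix}
\end{equation*}
has positive trace $t_{ij}+t_{ji}$ and positive determinant $t_{ij}t_{ji}-s_{ij}^{2}$, so it is positive definite; hence the quadratic form $t_{ij}x_i^{2} + 2 s_{ij}x_i x_j + t_{ji}x_j^{2}$ is nonnegative for every $(x_i,x_j)$.

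Finally, I would conclude that for any nonzero $x \in \R^{n}$ the diagonal sum $\sum_{i}(t_{ii}+s_{ii})x_i^{2}$ is already strictly positive while every paired term is nonnegative, so $x^{t}(A+S)x > 0$, proving positive definiteness. The whole argument is a routine manipulation; the only step that requires a moment of care is handling the diagonal separately from the off-diagonal blocks, since the hypothesis $t_{ij}t_{ji}>s_{ij}^{2}$ is applied at $i=j$ to obtain the inequality $t_{ii}>|s_{ii}|$ and at $i \neq j$ to get the $2 \times 2$ positive definiteness, and these two usages have a slightly different flavor.
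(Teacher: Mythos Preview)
Your proof is correct. The paper does not actually prove this lemma; it simply cites it as \cite[Lemma~6.1(a)]{BCIL}, so there is no in-paper argument to compare against. Your direct expansion and $2\times 2$ block decomposition is the natural proof and is almost certainly what appears in the cited reference.
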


\begin{cor} \textnormal{\cite[Corollary 6.2]{BCIL}} \label{smallS}
If $S = (s_{ij})$ is a symmetric matrix in $M_n(\R)$ with $\vert s_{ij} \vert <  \frac{1}{n}$ for all $i, j$, then $I + S$ is positive definite.
\end{cor}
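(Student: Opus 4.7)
The plan is to apply Lemma~\ref{AplusS1} with the diagonal matrix $A = I_n$, so that $a_1 = \cdots = a_n = 1$. All that remains is to exhibit weights $t_{ij} > 0$ with $\sum_{j=1}^n t_{ij} = 1 = a_i$ for every $i$ and $t_{ij}t_{ji} > s_{ij}^2$ for every pair $(i,j)$. Because $A$ is completely symmetric across coordinates, the natural choice is the uniform weighting $t_{ij} = 1/n$ for all $i,j$.

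With this choice the row-sum condition is automatic: $\sum_{j=1}^n t_{ij} = n \cdot (1/n) = 1$. For the product condition we compute $t_{ij}t_{ji} = 1/n^2$, and the hypothesis $|s_{ij}| < 1/n$ immediately yields $s_{ij}^2 < 1/n^2 = t_{ij}t_{ji}$. Both hypotheses of Lemma~\ref{AplusS1} are therefore met, and we conclude that $I_n + S = A + S$ is positive definite. I expect no genuine obstacle here: the strict inequality $|s_{ij}| < 1/n$ in the hypothesis is calibrated exactly so that the uniform weighting realizes the strict inequality $t_{ij}t_{ji} > s_{ij}^2$ demanded by Lemma~\ref{AplusS1}, and no other decomposition of the constant $1$ appears to be needed.
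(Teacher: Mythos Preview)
Your proposal is correct and is exactly the intended derivation: the paper does not give its own proof here but cites the corollary from \cite{BCIL}, where it is obtained precisely by applying Lemma~\ref{AplusS1} with $A=I_n$ and the uniform weights $t_{ij}=1/n$. There is nothing to add.
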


Let $\{\omega_1, \ldots, \omega_d\}$ be an integral basis of $K$ with all the $\omega_i$ totally positive.  Let
$$\mathcal P := \left\{\sum_{i=1}^d f_i\omega_i : 1 \leq f_i \leq p^{\ell}  \mbox{ for all } i\right\},$$
which is a complete set of representatives of the cosets in $\O/p^\ell\O$.  All the elements in $\mathcal P$ are totally positive integers.

Let $\pi = f_1\omega_1 + \cdots + f_d\omega_d$ be an element in $\mathcal P$, and let $f_\pi = \sum_{i=1}^d f_i^2$ and $w_\pi: = \sum_{i=1}^d \omega_i^2$, both of which are totally positive integers in $K$.   For convenience, we will call $(f_\pi, w_\pi)$ ``the pair" associated with $\pi$.  It is clear that
\begin{equation} \label{pi}
\begin{pmatrix}
f_\pi & \pi \\
\pi & w_\pi
\end{pmatrix} \rep I_{d}.
\end{equation}
 Define
 \begin{equation} \label{gamma}
\gamma: = \left \lceil \max_{\pi \in \mathcal P} \left\{\lcrown f_\pi \rcrown, \lcrown w_\pi \rcrown, \lcrown \pi \rcrown \right\} \right \rceil,
\end{equation}
which is a positive rational integer depending only on the choice of the integral basis of $K$ and hence can be viewed as a constant depending only on $K$.

\begin{lem} \label{AplusS2}
Let $n \geq 2$ be a positive integer, $A = \diag(a_1, \ldots, a_n)$ be a diagonal matrix in $M_n(\O)$, $S = (s_{ij})$ be a symmetric matrix in $M_n(\O)$.  For each $s_{ij}$, let $\pi_{ij}$ be the element in $\mathcal P$ such that $s_{ij} \equiv \pi_{ij} \mod p^\ell\O$.  Suppose that for $1 \leq i \leq n$, $a_i = \sum_{j=1}^n t_{ij}$ with $t_{ij} \succ 0$ and $t_{ij}t_{ji} \succ (s_{ij} - \pi_{ij})^2$ for all $j$.  If $\lbase t_{ii} + s_{ii} \rbase > 2(n-1)\gamma + \frac{r}{d}$ for all $i$, then $A + S$ is represented by $I_{5n + \frac{n(n-1)}{2}(d + 5)}$.
\end{lem}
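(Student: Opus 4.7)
The plan is to decompose $A+S$, as a matrix in $M_n(\O)$, into $n$ diagonal pieces together with, for each pair $i<j$, two $2\times 2$ blocks supported on rows and columns $i$ and $j$.  Writing $\tilde{s}_{ij} := s_{ij} - \pi_{ij} \in p^\ell\O$, I would set
$$P_{ij} := \begin{pmatrix} f_{\pi_{ij}} & \pi_{ij} \\ \pi_{ij} & w_{\pi_{ij}} \end{pmatrix}, \qquad \tilde{S}_{ij} := \begin{pmatrix} \alpha_{ij} & \tilde{s}_{ij} \\ \tilde{s}_{ij} & \beta_{ij} \end{pmatrix}.$$
The block $P_{ij}$ is represented by $I_d$ by \eqref{pi}; the diagonal entries $\alpha_{ij}, \beta_{ij}$ will be chosen so that $\tilde{S}_{ij} \rep I_5$ via Lemma~\ref{rep25}; and the remaining diagonal part $D := \diag(d_1, \ldots, d_n)$, determined by additivity, will be represented by $I_{5n}$ via Lemma~\ref{rep15}.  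The totals $\binom{n}{2}d$ squares from the $P_{ij}$, $\binom{n}{2}\cdot 5$ from the $\tilde{S}_{ij}$, and $5n$ from $D$ add up to the claimed bound.

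The key construction is the choice of $\alpha_{ij}$ and $\beta_{ij}$.  For each pair $i<j$, let $\xi_{ij}$ (resp.\ $\eta_{ij}$) denote the unique representative in $\mathcal P$ of the coset $-t_{ij} + p^\ell\O$ (resp.\ $-t_{ji} + p^\ell\O$), and put $\alpha_{ij} := \xi_{ij} + t_{ij}$, $\beta_{ij} := \eta_{ij} + t_{ji}$.  By construction both $\alpha_{ij}$ and $\beta_{ij}$ lie in $p^\ell\O \subseteq \p^\ell$ for every prime $\p \mid p\O$; both are totally positive since each is a sum of totally positive elements; and the identity $\alpha_{ij}\beta_{ij} - t_{ij}t_{ji} = \xi_{ij}\eta_{ij} + \xi_{ij}t_{ji} + \eta_{ij}t_{ij}$ together with the hypothesis $t_{ij}t_{ji} \succ \tilde{s}_{ij}^2$ give $\alpha_{ij}\beta_{ij} \succ \tilde{s}_{ij}^2$.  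Hence $\tilde{S}_{ij}$ is a positive definite integral rank-$2$ form whose scale is divisible by $\p^\ell$, and Lemma~\ref{rep25} yields $\tilde{S}_{ij} \rep I_5$.

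The diagonal piece is then forced to be
$$d_i := (t_{ii} + s_{ii}) - \sum_{j>i}(\xi_{ij} + f_{\pi_{ij}}) - \sum_{j<i}(\eta_{ji} + w_{\pi_{ji}}),$$
and a direct computation using $a_i = t_{ii} + \sum_{j \neq i} t_{ij}$ confirms $A + S = D + \sum_{i<j}(P_{ij} + \tilde{S}_{ij})$.  Each of the $2(n-1)$ elements subtracted from $t_{ii} + s_{ii}$ either belongs to $\mathcal P$ or equals $f_\pi$ or $w_\pi$ for some $\pi \in \mathcal P$, so has every infinite-place value at most $\gamma$ by the definition of $\gamma$ in \eqref{gamma}.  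The total subtraction is therefore at most $2(n-1)\gamma$ at every place, and the hypothesis $\lbase t_{ii} + s_{ii} \rbase > 2(n-1)\gamma + r/d$ gives $d_i^\pnu > r/d$ for every $\nu \mid \infty$.  Thus $d_i \succ 0$ and $\TraceK(d_i) > r$, so $d_i \rep I_5$ by Lemma~\ref{rep15} and $D \rep I_{5n}$.  Summing the three representations produces $A + S \rep I_{5n + \binom{n}{2}(d+5)}$.  The main subtlety is the simultaneous fulfillment of the integrality-in-$p^\ell\O$, total-positivity, and product-inequality constraints on $\alpha_{ij}, \beta_{ij}$; the representatives in $\mathcal P$ provide exactly the needed flexibility, and the numerical hypothesis on $\lbase t_{ii} + s_{ii} \rbase$ is precisely the margin required to absorb the resulting residues into a totally positive diagonal with sufficiently large trace.
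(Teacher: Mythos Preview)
Your proof is correct and follows essentially the same strategy as the paper: decompose $A+S$ into a diagonal part handled by Lemma~\ref{rep15}, $\binom{n}{2}$ blocks $P_{ij}$ represented by $I_d$ via \eqref{pi}, and $\binom{n}{2}$ rank-$2$ blocks with all entries in $p^\ell\O$ handled by Lemma~\ref{rep25}. Your parametrization of the $p^\ell$-divisible blocks via $\alpha_{ij}=\xi_{ij}+t_{ij}$, $\beta_{ij}=\eta_{ij}+t_{ji}$ with $\xi_{ij},\eta_{ij}\in\mathcal P$ is in fact slightly cleaner than the paper's, which routes the same construction through an auxiliary shift by $\gamma$ and residues $\delta_{ij}\in\mathcal P$, but the two are equivalent and the bookkeeping for the diagonal remainder $d_i$ (your notation) versus $b_i$ (the paper's) leads to the same estimate $\lbase d_i\rbase>r/d$.
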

\begin{proof}
As is done in \eqref{pi}, for any $i < j$, let $(n_{ij}, n_{ij})$ be the pair associated with $\pi_{ij}$ such that
$$\begin{pmatrix}
n_{ij} & \pi_{ij} \\
\pi_{ji} & n_{ji}
\end{pmatrix} \rep I_d.$$

For all $i \neq j$, since $\gamma \succ n_{ij}$ we can write
$$t_{ij} + \gamma = n_{ij} + (p^\ell t_{ij}' - \delta_{ij})$$
with $t_{ij}' \succ 0$ and $\delta_{ij} \in \mathcal P$.  Then
$$a_i + s_{ii} = b_i + \sum_{j\neq i} n_{ij} + \sum_{j\neq i} p^\ell t_{ij}'$$
where
$$b_i = t_{ii} + s_{ii} - \left((n-1)\gamma + \sum_{j\neq i} \delta_{ij}\right).$$
Since $\lbase t_{ii} + s_{ii} \rbase > 2(n-1)\gamma + \frac{r}{d}$ and $\gamma \succ \delta_{ij}$ for all $j \neq i$, we have
$$\TraceK(b_i) = \sum_{\nu\mid \infty} b_i^\pnu > d\cdot \frac{r}{d} = r$$
which implies that each $b_i$ is represented by $I_5$ by Lemma \ref{rep15}.

Let $p^\ell q_{ij} = s_{ij} - \pi_{ij}$ for all $i, j$.  Now we can write
\begin{equation*}
\begin{split}
A+S&=\sum_ia_iE_{ii}+\sum_{1\leq i,j\leq n}s_{ij}E_{ij}\\
   &=\sum_i(t_{ii}+s_{ii})E_{ii}+\sum_{j\neq i}t_{ij}E_{ii}+\sum_{j\neq i}p^{\ell}q_{ij}E_{ij}+\sum_{j\neq i}\pi_{ij}E_{ij}\\
   &=\sum_ib_iE_{ii}+\sum_{j\neq i}p^{\ell}t'_{ij}E_{ii}+\sum_{j\neq i}p^{\ell}q_{ij}E_{ij}+\sum_{j\neq i}n_{ij}E_{ii}+\sum_{j\neq i}\pi_{ij}E_{ij}\\
   &=\mathrm{diag}(b_1,...,b_n)+\sum_{i<j}(p^\ell t'_{ij}E_{ii}+ p^\ell q_{ij}E_{ij} + p^\ell q_{ji}E_{ji} + p^\ell t_{ji}'E_{jj}) \\
    & \quad  + \sum_{i < j} (n_{ij}E_{ii} + \pi_{ij}E_{ij} + \pi_{ji}E_{ji} + n_{ji}E_{jj}).\nonumber
\end{split}
\end{equation*}
The diagonal matrix $\diag(b_1, \ldots, b_n)$ is represented by $I_{5n}$.  Each of the $\frac{n(n-1)}{2}$ symmetric matrices $(n_{ij}E_{ii} + \pi_{ij}E_{ij} + \pi_{ji}E_{ji} + n_{ji}E_{jj})$ is represented by $I_d$ by \eqref{pi}.  For any $j \neq i$, $p^\ell t_{ij}' \succ r_{ij} = t_{ij} + \gamma - n_{ij} \succ t_{ij}$ and $t_{ij}t_{ji} \succ (s_{ij} - \pi_{ij})^2 = (p^\ell q_{ij})^2$.  Thus, the symmetric matrix $(p^\ell t'_{ij}E_{ii}+ p^\ell q_{ij}E_{ij} + p^\ell q_{ji}E_{ji} + p^\ell t_{ji}'E_{jj})$ is positive semidefinite of rank 2 and with scale divisible by $p^\ell$, hence it is represented by $I_5$ by Lemma \ref{rep25}.  Therefore, $A + S$ is represented by $I_{5n + \frac{n(n-1)}{2}(d + 5)}$.
\end{proof}

\section{Proof of main theorem}

The following proposition is the backbone of the proof of Theorem \ref{maintheorem}.

\begin{prop} \label{backbone}
Suppose that $K$ has class number $1$.  There exist constants $\xi$ and $D_5$, depending only on $K$, such that if the minimum of a positive definite integral quadratic form over $K$ in $n\,(\geq 2)$ variables is larger than $D_5\,e^{\xi\sqrt{n}}$ then it is represented by $I_{6n + \frac{n(n-1)}{2}(d + 5)}$.
\end{prop}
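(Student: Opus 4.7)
The plan is to reduce $Q$ to a balanced HKZ-reduced form, write $Q = I_n + (Q-I_n)$, and apply Lemma \ref{AplusS2} to $Q-I_n$. Since $I_n$ trivially represents itself with $n$ squares, this will yield $Q \rep I_{5n + \frac{n(n-1)}{2}(d+5)+n} = I_{6n+\frac{n(n-1)}{2}(d+5)}$.

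First, by Proposition \ref{balancedhkz} and the class number $1$ hypothesis, I may assume $Q = U^t H U$ is balanced HKZ-reduced, so $H = \diag(h_1,\ldots,h_n)$ is totally positive and the inner coefficients $u_{ij}\in \mathcal E(j-i)$ for $i<j$. From $\NormK(h_1)=\min(Q)$, condition (2) of Definition \ref{weakly-reduced-definition}, and Corollary \ref{hihj3}, I derive a lower bound $h_i^{\pnu}\geq \min(Q)^{1/d}/C_1(n)$, in which $C_1(n)\leq e^{O(\sqrt n)}$ by \eqref{alpha_m_bar}. Choosing $\xi$ and $D_5$ large enough that $\min(Q)>D_5 e^{\xi\sqrt n}$ forces each $h_i^{\pnu}$ to exceed any prescribed polynomial in $n$ and the $K$-constants $\lambda,\beta,\gamma,r,D_2,D_4$. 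In parallel, the expansion $Q_{ij}=\sum_{k\leq\min(i,j)} u_{ki} h_k u_{kj}$ together with $|u_{ij}^{\pnu}|\leq \oc(j-i)$ gives $|Q_{ij}^{\pnu}|\leq h_{\min(i,j)}^{\pnu}\cdot C_2(n)$ with $C_2(n)\leq e^{O(\sqrt n)}$.

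I then apply Lemma \ref{AplusS2} to $Q-I_n$ with $A=\diag(Q_{11}-1,\ldots,Q_{nn}-1)$, $s_{ii}=0$, and $s_{ij}=Q_{ij}$ for $i\neq j$. For each $i\neq j$, pick $\pi_{ij}\in \mathcal P$ with $Q_{ij}\equiv\pi_{ij}\pmod{p^\ell\O}$, and using the defining property of $\beta$ in \eqref{beta} choose totally positive $t_{ij}\in\O$ whose $\nu$-components sit just above $|Q_{ij}^{\pnu}-\pi_{ij}^{\pnu}|$, setting $t_{ji}=t_{ij}$; then automatically $t_{ij}t_{ji}\succ (Q_{ij}-\pi_{ij})^2$. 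Finally set $t_{ii}:=(Q_{ii}-1)-\sum_{j\neq i}t_{ij}$.

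The main obstacle is verifying that $t_{ii}$ is totally positive and $\lbase t_{ii}\rbase > 2(n-1)\gamma + r/d$. At each $\nu\mid\infty$, this reduces to
\[
Q_{ii}^{\pnu} \ >\ \sum_{j\neq i}\bigl|Q_{ij}^{\pnu}-\pi_{ij}^{\pnu}\bigr| + (n-1)(\beta+1) + 2(n-1)\gamma + r/d + 1.
\]
Careful bookkeeping with the upper bound $|Q_{ij}^{\pnu}|\leq h_{\min(i,j)}^{\pnu} C_2(n)$, together with the asymmetry permitted in choosing $t_{ij}\neq t_{ji}$ in Lemma \ref{AplusS2}---one channels most of the off-diagonal mass into the pair whose diagonal budget is larger---reduces the right-hand side to a quantity of the form $h_i^{\pnu}\cdot E(n)+O(n\gamma+r)$ with $E(n)<1$; then the lower bound $h_i^{\pnu}\geq \min(Q)^{1/d}/C_1(n)$ absorbs the additive term once $\xi$ and $D_5$ are chosen sufficiently large. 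With the hypotheses of Lemma \ref{AplusS2} in hand, one obtains $Q-I_n\rep I_{5n+\frac{n(n-1)}{2}(d+5)}$, and adding the trivial representation of $I_n$ proves the proposition.
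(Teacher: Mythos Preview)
Your decomposition $Q=I_n+(Q-I_n)$ followed by a direct application of Lemma~\ref{AplusS2} with $A=\diag(Q_{11}-1,\ldots,Q_{nn}-1)$ and $S$ the off-diagonal part of $Q$ does not go through. The unjustified step is the assertion that asymmetric bookkeeping produces $\sum_{j\neq i}t_{ij}^{\pnu}\leq h_i^{\pnu}\cdot E(n)$ with $E(n)<1$. Here $s_{ij}=Q_{ij}$ is of order $h_{\min(i,j)}$, so the constraint $t_{ij}t_{ji}\succ(s_{ij}-\pi_{ij})^2$ forces $t_{ij}t_{ji}$ to be of order $h^2$; by AM--GM this gives $t_{ij}+t_{ji}\gtrsim 2|Q_{ij}|$, and these lower bounds can exhaust the diagonal budgets $Q_{ii}-1$ no matter how mass is channelled between $t_{ij}$ and $t_{ji}$. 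A concrete obstruction already occurs over $\Q$ at $n=3$: take $h_1=h_2=h_3=h$ with $h\in 64\Z$ large and $u_{12}=u_{23}=\tfrac12$, $u_{13}=\tfrac58$. One checks this is balanced HKZ-reduced with $\min(Q)=h$, yet the system $\alpha_{ij}\alpha_{ji}\geq (Q_{ij}/h)^2$, $\sum_{j\neq i}\alpha_{ij}\leq Q_{ii}/h$ in positive reals reduces, after eliminating variables, to $89a^2-168a+80\leq 0$, which has negative discriminant and hence no solution. (You also first set $t_{ji}=t_{ij}$ and then invoke asymmetry; the point is that even full asymmetry cannot rescue the argument.)

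The paper avoids this obstacle by a genuinely different decomposition. It subtracts a \emph{small} diagonal $A$ with $na_i\sim h_i/\bigl(n^2\lambda^2\od(n)^{1/d}\oc(n)^2\bigr)$ so that $Q-A$ stays positive definite, then writes $Q-A=W^tW$ over $K_\infty$ by Gram--Schmidt with $W$ upper triangular, and rounds $W$ to an integral $P\in M_n(\O)$. Because the entries of $W$ are of size $O\bigl(\sqrt{h_j^{\pnu}}\,\bigr)$, the residual $S=Q-A-P^tP$ satisfies $|s_{ij}^{\pnu}|=O\bigl(\sqrt{h_j^{\pnu}}\,\bigr)$ rather than $O(h_j^{\pnu})$. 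Now $(s_{ij}-\pi_{ij})^2=O(h_j)$ while $t_{ij}t_{ji}\sim h_ih_j/\mathrm{poly}(n)$, so the hypothesis of Lemma~\ref{AplusS2} holds once every $h_i^{\pnu}$ exceeds an $e^{O(\sqrt n)}$ threshold---exactly what the large-minimum assumption delivers. This square-root reduction of the off-diagonal error via an approximate integral Cholesky factor is the key idea missing from your proposal, and it is what accounts for the extra $I_n$ (from $P^tP$) in the count $6n+\tfrac{n(n-1)}{2}(d+5)$.
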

\begin{proof}
Let $Q$ be a positive definite integral quadratic form over $K$ in $n$ ($\geq 2$) variables.  We may assume that $Q$ is already balanced HKZ-reduced.  We will write $Q$ as a sum $P^tP + A + S$ of matrices in $M_n(\O)$, where $P$, $A$, and $S$ are matrices in $M_n(\O)$ with $A$ diagonal and $S$ symmetric.  With Lemma \ref{AplusS2} we will show that if $\min(Q)$ is large enough then $A + S$ is represented by $I_{5n + \frac{n(n-1)}{2}(d + 5)}$.  The proposition follows immediately as $P^tP$ is clearly represented by $I_n$.

Since $Q$ is balanced HKZ-reduced, we may write
$$Q(x_1, \ldots, x_n) = \sum_{i=1}^n h_i\left(x_i + \sum_{j=i+1}^n u_{ij}x_j\right)^{\!\!\!2},$$
with the outer coefficients $h_1, \ldots, h_n$ and inner coefficients $u_{ij}$ satisfying the inequalities and conditions in Definitions \ref{weakly-reduced-definition} and  \ref{balancedhkzdefn}.  The Gram matrix of $Q$ is $U^tHU$ with $H = \diag(h_1, \ldots, h_n)$ and $U$ upper triangular unipotent.
\vskip 2mm

\noindent {\bf Step 1.}  Here we will find a diagonal matrix $A$, with diagonal entries comparable with the outer coefficients of $Q$, such that $Q - A$ remains positive definite.

For each $\nu \mid \infty$ and $k = 1, \ldots, n$, let $\eta_{\nu k}$ be the real number such that
$$n^3 \lambda^2 \,\od(n)^{\frac{1}{d}}\,\oc(n)^2\, \eta_{\nu k} = h_k^\pnu,$$
where $\od$ and $\lambda$ are the function and the constant, respectively, appeared in Definition \ref{weakly-reduced-definition}, and $\oc$ is the function defined in  Lemma \ref{cm}.
Suppose that
\begin{equation} \label{step1}
\min(Q) > \od(n)\lambda^{d-1}\left(2\beta\lambda^2 n^3\, \od(n)^{\frac{1}{d}}\, \oc(n)^2\right)^d.
\end{equation}
Then, by conditions (1) and (2) in Definition \ref{weakly-reduced-definition}, for any $\nu \mid \infty$,
$$\lambda^{d-1} (h_k^\pnu)^d \geq \NormK(h_k) \geq \frac{\min(Q)}{\od(n)} > \lambda^{d-1} \left(2\beta\lambda^2 n^3\, \od(n)^{\frac{1}{d}}\, \oc(n)^2\right)^d.$$
Hence
$$h_k^\pnu > 2\beta\lambda^2 n^3\, \od(n)^{\frac{1}{d}}\, \oc(n)^2$$
and $\eta_{\nu k} > 2\beta$ as a result.  Now, by the definition of $\beta$, there exists $a_k \in \O$ such that
$$\vert a_k - (\eta_{\nu k} - \beta) \vert_\nu < \beta, \quad \mbox{ for all } \nu \mid \infty.$$
Then $\eta_{\nu k} - 2\beta < a_k^\pnu < \eta_{\nu k}$ and hence $a_k \succ 0$ for all $\nu \mid \infty$.  Let
$$A = \diag(na_1, \ldots, na_n).$$

Let $\sqrt{H}$ be the positive definite square-root of $H$ as a Humbert form.  Then $Q = I_n[\sqrt{H}U]$, and
$$Q - A = \left(I_n - A[U^{-1}\sqrt{H}^{-1}]\right)[\sqrt{H}U].$$
So, it suffices to show that $I_n - A[U^{-1}\sqrt{H}]$ is positive definite.  Let $y_{ij}$ and $b_{ij}$ be the $(i,j)$-entries of $U^{-1}$ and $A[U^{-1}]$, respectively.  Since $Q$ is balanced HKZ-reduced, $\vert y_{ij} \vert_\nu \leq \oc(n- i)$ for $1 \leq i \leq j \leq n$. Then,  for all $\nu \mid \infty$ and $1 \leq i \leq j \leq n$,
\begin{eqnarray*}
\vert b_{ij} \vert_\nu & \leq & \sum_{k=1}^{i} \vert na_k\, y_{ki}\, y_{kj} \vert_\nu \\
    & <  & \sum_{k=1}^{i} n^{-2}\,\od(n)^{-\frac{1}{d}}\, \lambda^{-2}\, \oc(n)^{-2} h_k^\pnu \,\, \oc(n-k)^2\\
    & \leq & \frac{1}{n^2} \sum_{k=1}^{i} h_k^\pnu \, \frac{1}{\od(n)^{\frac{1}{d}}\lambda^2}.
\end{eqnarray*}
By Corollary \ref{hihj3}, we have
$$\sqrt{h_k^\pnu} \leq \lambda\, \od(n)^{\frac{1}{2d}}\, \sqrt{h_\ell^\pnu}$$
for $\ell = i$ or $j$.  Therefore,
$$\vert b_{ij} \vert_\nu \leq  \frac{1}{n}\, \sqrt{h_i^\pnu h_j^\pnu},$$
and hence the $(i,j)$-entry of the $\nu$-th component of the Humbert form $A[U^{-1}\sqrt{H^{-1}}]$ is
$$\vert b_{ij} \vert_\nu \, \sqrt{h_i^\pnu h_j^\pnu}^{-1} < \frac{1}{n}.$$
Thus, by Corollary \ref{smallS}, $I - A[U^{-1}\sqrt{H^{-1}}]$ is positive definite.
\vskip 2mm

\noindent {\bf Step 2.} We now decompose $Q$ as $Q = P^tP + A + S$, a sum of symmetric matrices in $M_n(\O)$, and then estimate the size of the entries of $S$.

By the Gram-Schmidt process, there exists an upper triangular matrix $N$ in $M_n(K_\infty)$ such that
$$I_n - A[U^{-1}\sqrt{H}^{-1}] = N^tN.$$
For each $\nu \mid \infty$, let $n_{ij}^\nu$ denote the $(i,j)$-entry of the $\nu$-th component of $N$.  The Humbert form $I_n - N^tN = A[U^{-1}\sqrt{H}^{-1}]$ is positive definite; so at each $\nu \mid \infty$ and $1 \leq j \leq n$,
$$1 - \sum_{i \leq j} \vert n_{ij}^\nu\vert_\nu^2 > 0$$
and hence $\vert n_{ij}^\nu \vert_\nu < 1$.

Let $W$ be the upper triangular matrix $N\sqrt{H} U$ in $M_n(K_\infty)$. Then
$$W^tW = N^tN[\sqrt{H}U] = Q - A,$$
For $j \geq i$, let $w_{ij}^\nu$ denote the $(i,j)$-entry of the $\nu$-th component of $W$.  Then,
\begin{eqnarray*}
\vert w_{ij}^\nu\vert_\nu &\leq & \sum_{k=i}^j \vert n_{ij}^\nu \vert_\nu \,\sqrt{h_k^\pnu}\,\, \vert u_{kj} \vert_\nu\\
    & < & \sum_{k=i}^j \sqrt{h_k^\pnu}\,\, \vert u_{kj} \vert_\nu\\
    & \leq & n\, \lambda\, \sqrt{\od(n)^{1/d}\, h_j^\pnu}\,\, \oc(j).
\end{eqnarray*}
By the definition of $\beta$, there exists an upper triangular matrix $P$ in $M_n(\O)$ such that the entries of $F: = W - P$ are in ${\mathcal B}_K$.  Note that $F$ is also upper triangular.  For $\nu \mid \infty$, let $f_{ij}^\nu$ denote the $(i,j)$-entry of the $\nu$-th component of $F$.  A simple algebraic manipulation shows that $Q = P^tP + A + S$, where
$$S = F^tW + W^tF - F^tF$$
which is a symmetric matrix in $M_n(\O)$.  Therefore, for $j \geq i$, the size of the $\nu$-th component of the $(i,j)$-entry of $F^tW$ is
$$\left\vert \sum_{k=1}^{j} f_{ki}^\nu\, w_{kj}^\nu \right \vert_\nu \leq n\, \beta\, n\, \lambda \sqrt{\od(n)^{1/d} h_j^\pnu}\,\,\oc(j).$$
So,
\begin{equation} \label{sij}
\vert s_{ij} \vert_\nu \leq 2n^2\, \beta\, \oc(j)\, \lambda\, \sqrt{\od(n)^{1/d}h_j^\pnu} + n\beta^2.
\end{equation}

Suppose that
\begin{equation} \label{step2}
\min(Q) \geq \left(\frac{n\beta^2 + \gamma}{n^2\beta}\right)^{2d}\lambda^{d-1}.
\end{equation}
where $\gamma$ is defined by \eqref{gamma}.

Since $\lambda^{d-1} (h_1^\pnu)^d \geq \NormK(h_1) = \min(Q)$, \eqref{step2} implies that
\begin{eqnarray*}
n^2\, \beta\, \oc(j)\, \lambda\, \sqrt{\od(n)^{1/d}h_j^\pnu} - \gamma & \geq &  n^2\, \beta \, \oc(j)\, \sqrt{h_1^\pnu} - \gamma \\
        & \geq & n^2\, \beta\, \oc(j)\frac{n\beta^2 + \gamma}{n^2\beta} - \gamma \\
        & \geq & n\beta^2 + \gamma - \gamma \\
        & \geq & n\beta^2.
\end{eqnarray*}
Combining with \eqref{sij}, we have
$$\vert s_{ij}\vert_\nu \leq 3n^2\, \beta\, \oc(j)\, \lambda\, \sqrt{\od(n)^{1/d}h_j^\pnu} - \gamma,$$
and
$$\vert s_{ij} - \pi_{ij} \vert_\nu \leq \vert s_{ij} \vert_\nu + \vert \pi_{ij} \vert_\nu \leq  3n^2\, \beta\, \oc(j)\, \lambda\, \sqrt{\od(n)^{1/d}h_j^\pnu}.$$
\vskip 2mm

\noindent {\bf Step 3.} We will show that $A + S$ is represented by a sum of squares provided $\min(Q)$ is larger than $D_5\, e^{\xi\sqrt{n}}$ for some constants $D_5$ and $\xi$ depending only on $K$.

For $1 \leq i, j \leq n$, let $t_{ij} = a_i$ so that $na_i = \sum_{j=1}^n t_{ij}$.    Suppose that
\begin{equation} \label{step3a}
\min(Q) \geq \lambda^{d-1}\, \od(n) \, \left(2n^3\, \lambda^2\, \od(n)^{1/d}\, \oc(n)^2\, 2\beta\right)^d.
\end{equation}
Since $\od(n)\, \lambda^{d-1} (h_i^\pnu)^d \geq \min(Q)$ for each $\nu \mid \infty$, we have
$$h_i^\pnu \geq  2n^3\, \lambda^2\, \od(n)^{1/d}\, \oc(n)^2\, 2\beta$$
and hence
\begin{eqnarray*}
t_{ij}^\pnu = a_i^\pnu & > & \eta_{\nu i} - 2\beta \\
    & = & \frac{h_i^\pnu}{n^3\, \lambda^2\, \od(n)^{1/d}\, \oc(n)^2} - 2\beta \\
    & \geq & \frac{h_i^\pnu}{2n^3\, \lambda^2\, \od(n)^{1/d}\, \oc(n)^2}.
\end{eqnarray*}
Thus,
$$t_{ij}^\pnu t_{ji}^\pnu \geq \frac{h_i^\pnu h_j^\pnu}{4n^6\, \lambda^4\, \od(n)^{2/d}\, \oc(n)^4}.$$
But
$$\vert s_{ij} - \pi_{ij} \vert_\nu^2 \leq 9n^4\, \beta^2\, \oc(n)^2\, \lambda^2\, \od(n)^{1/d}\, h_j^\pnu$$
so, for $i \leq j$, we have
\begin{eqnarray*}
t_{ij}^\pnu t_{ji}^\pnu & \geq & \vert s_{ij} - \pi_{ij} \vert_\nu^2 \frac{h_i^\pnu}{36n^{10}\, \beta^2\, \lambda^6\, \od(n)^{3/d}\, \oc(n)^6}\\
    & = & 4\vert s_{ij} - \pi_{ij} \vert_\nu^2\frac{\lambda^2\, \od(n)^{1/d}\, h_i^\pnu}{144 n^{10}\, \beta^2\, \lambda^8\, \od(n)^{4/d}\, \oc(n)^6}\\
    & \geq & 4 \vert s_{ij} - \pi_{ij} \vert_\nu^2 \frac{h_1^\pnu}{144 n^{10}\, \beta^2 \, \lambda^8 \, \od(n)^{4/d}\, \oc(n)^6}.
\end{eqnarray*}

Suppose that
\begin{equation} \label{step3b}
\min(Q) \geq \lambda^{d-1}\, \left(144n^{10}\, \beta^2\, \lambda^8\, \od(n)^{4/d}\, \oc(n)^6\right)^d.
\end{equation}
Then $h_1^\pnu \geq 144n^{10}\, \beta^2\, \lambda^8 \, \od(n)^{4/d}\, \oc(n)^6$.  So,
$$t_{ij}^\pnu t_{ji}^\pnu \geq 4\vert s_{ij} - \pi_{ij} \vert_\nu^2 \geq \vert s_{ij} - \pi_{ij} \vert_\nu^2.$$
As a result, $t_{ij}t_{ji} \succ (s_{ij} - \pi_{ij})^2$ and $t_{ii}^\pnu > 2\vert s_{ii} - \pi_{ii} \vert_\nu$.  Therefore,
\begin{eqnarray*}
t_{ii}^\pnu + s_{ii}^\pnu =  t_{ii}^\pnu + s_{ii}^\pnu - \pi_{ii}^\pnu + \pi_{ii}^\pnu & \geq & \frac{1}{2}t_{ii}^\pnu + \pi_{ii}^\pnu\\
    & \geq & \frac{1}{2}t_{ii}^\pnu\\
    & = & \frac{1}{2}a_i^\pnu\\
    & > & \frac{1}{2}\eta_{\nu i} - \beta.
\end{eqnarray*}
From the definition of $\eta_{\nu i}$ we have
$$\eta_{\nu i} = \frac{h_i^\pnu}{n^3 \,\lambda^2\, \od(n)^{1/d}\, \oc(n)^2} \geq \frac{h_1^\pnu}{n^3\, \lambda^4\, \od(n)^{2/d}\, \oc(n)^2}.$$
Therefore,
$$t_{ii}^\pnu + s_{ii}^\pnu > \frac{h_1^\pnu}{2n^3\, \lambda^4\, \od(n)^{2/d}\, \oc(n)^2} - \beta.$$

Suppose that
\begin{equation} \label{step3c}
\min(Q) \geq \lambda^{d-1}\left(2n^3 \, \lambda^4\, \od(n)^{2/d}\, \oc(n)^2\, \left(4(n-1)\gamma + \frac{r}{d} + \beta\right)\right)^d,
\end{equation}
where $r$ is the rational integer obtained by Lemma \ref{rep15}. Then, since we have the inequality $\lambda^{d-1}(h_1^\pnu)^d \geq \min(Q)$,
$$\lbase t_{ii} + s_{ii} \rbase > 4(n-1)\gamma + \frac{r}{d}.$$

It is easy to see that there exist constants $D_5$ and $\xi$, depending only on $K$, such that if $\min(Q) > D_5\,e^{\xi\sqrt{n}}$ then \eqref{step1}, \eqref{step2}, \eqref{step3a}, \eqref{step3b}, and \eqref{step3c} are all satisfied and hence $A + S$ is represented by $I_{5n + \frac{n(n-1)}{2}(d + 5)}$ by Lemma \ref{AplusS2}.
\end{proof}

\begin{proof}[Proof of Theorem \ref{maintheorem}]
Let $Q(x_1, \ldots, x_n)$ be a positive definite integral quadratic form over $K$ which is represented by a sum of squares, say,
$$Q(x_1, \ldots, x_n) = \sum_{i=1}^N (a_{i1}x_1 + \cdots a_{in}x_n)^2$$
where $a_{ij} \in \O$ for all $i, j$.   By picking a larger $D_5$ in Proposition \ref{backbone} if necessary, we may assume that $g_\O(1) \leq D_5\,e^{\xi}$.  So, we may further assume that $n \geq 2$.   It follows from Proposition \ref{backbone} that if $\min(Q) > D_5\,e^{\xi\sqrt{n}}$, then $Q$ is represented by a sum of at most $6n + \frac{n(n-1)}{2}(d + 5)$ squares.

Now, we suppose that $\min(Q) \leq D_5\,e^{\xi\sqrt{n}}$.  By switching $Q$ to an integrally equivalent form that is balanced reduced, we have the equality $\min(Q) = \NormK(a_{11}^2 + \cdots + a_{N1}^2)$.   Assume without loss of generality that $a_{11}, \ldots, a_{M1}$ are nonzero and the rest of the $a_{i1}$'s are zero.  Then,
$$M \leq \sum_{i=1}^M \NormK(a_{i1}^2) \leq  \NormK(a_{11}^2 + \cdots + a_{M1}^2) = \min(Q).$$
So, $M \leq \lfloor D_5\,e^{\xi\sqrt{n}} \rfloor$, and hence
$$Q(x_1, \ldots, x_n) = \sum_{i=1}^{\lfloor D_5\,e^{\xi\sqrt{n}} \rfloor} (a_{i1}x_1 + \cdots a_{in}x_n)^2 + Q'(x_2, \ldots, x_n),$$
and $Q'$ is a positive semidefinite quadratic form in $n-1$ variables represented by a sum of squares.  By definition, $Q'$ must be represented by the sum of $g_\O(n-1)$ squares.  Thus,
$$g_\O(n) \leq \lfloor D_5\,e^{\xi\sqrt{n}} \rfloor + g_\O(n-1).$$
Inductively, we see that $g_\O(n) \leq D_5\, n\, e^{\xi\sqrt{n}} \leq D\,e^{\kappa\sqrt{n}}$ for some suitable larger constants $D$ and $\kappa$.
\end{proof}

\end{document}